\date{\today}
\newcommand{\Z}{{\mathbb Z}}
\newcommand{\R}{{\mathbb R}}
\newcommand{\T}{{\mathbb T}}
\newcommand{\Q}{{\mathbb Q}}
\newtheorem{theorem}{Theorem}
\newtheorem{lemma}{Lemma}
\newtheorem{prop}{Proposition}
\newtheorem{coro}{Corollary}
\begin{document}
\title[Pinned Repetitions in Symbolic Flows]{Pinned Repetitions in Symbolic Flows:\\Preliminary Results}

\author[M.\ Boshernitzan]{Michael Boshernitzan}

\address{Department of Mathematics, Rice University, Houston, TX~77005, USA}

\email{michael@rice.edu}

\author[D.\ Damanik]{David Damanik}

\address{Department of Mathematics, Rice University, Houston, TX~77005, USA}

\email{damanik@rice.edu}

\thanks{D.\ D.\ was supported in part by NSF grant
DMS--0653720.}

\begin{abstract}
We consider symbolic flows over finite alphabets and study certain
kinds of repetitions in these sequences. Positive and negative
results for the existence of such repetitions are given for
codings of interval exchange transformations and codings of
quadratic polynomials.
\end{abstract}

\maketitle

\section{Introduction}

Fix some finite alphabet $\mathcal{A}$ and consider the compact
space $\mathcal{A}^\Z$ of two-sided sequences over this alphabet.
Here we endow $\mathcal{A}$ with the discrete topology and
$\mathcal{A}^\Z$ with the product topology.

We want to study repetitions in sequences $\omega \in
\mathcal{A}^\Z$ that either begin at the origin or are centered at
the origin and hence are ``pinned.'' Moreover, we also want to
study how many times arbitrarily long subwords are repeated in
this way. Thus, for $\omega \in \mathcal{A}^\Z$, we define
$$
R_n(\omega) = 1 + \frac{1}{n} \sup \{ m : \omega_k = \omega_{k+n}
\text{ for } 1 \le k \le m \},
$$
$$
T_n(\omega) = 1 + \frac{1}{n} \sup \{ m : \omega_k = \omega_{k+n}
\text{ and } \omega_{n+1-k} = \omega_{1-k} \text{ for } 1 \le k
\le m \},
$$
and
$$
R(\omega) = \limsup_{n \to \infty} R_n(\omega),
$$
$$
T(\omega) = \limsup_{n \to \infty} T_n(\omega).
$$
For definiteness, we will declare $\sup \emptyset = 0$. Notice
that $R(\omega), T(\omega)$ may be infinite and that we always
have $T(\omega) \le R(\omega)$.

Repetitions that begin at the origin have been studied, for
example, by Berth\'e et al.\ \cite{bhz}.  (In their terminology,
$R(\omega)=\mathrm{ice}(\omega)$,
the {\em initial critical exponent}\, of  $\omega$).

 Repetitions that are
centered at the origin are of interest in the study of
Schr\"odinger operators; compare the survey articles \cite{d1,d2}.
As explained there, if $T(\omega)$ is sufficiently large, one can
prove results about the continuity of spectral measures of an
associated Schr\"odinger operator using the Cayley-Hamilton
\mbox{theorem}.

Denote the shift transformation on $\mathcal{A}^\Z$ by $S$, that
is, $(S \omega)_k = \omega_{k+1}$. A subshift $\Omega$ is a
closed, $S$-invariant subset of $\mathcal{A}^\Z$. A subshift
$\Omega$ is called minimal if the $S$-orbit of every $\omega \in
\Omega$ is dense in $\Omega$. We denote by $\mathcal{W}(\omega)$
the set of all finite words over $\mathcal{A}$ that occur
somewhere in $\omega$. If $\Omega$ is minimal, then there is a set
$\mathcal{W}(\Omega)$ such that $\mathcal{W}(\omega) =
\mathcal{W}(\Omega)$ for every $\omega \in \Omega$. Finally, we
let $\mathcal{W}_n(\Omega) = \mathcal{W}(\Omega) \cap
\mathcal{A}^n$.

\begin{prop}
Suppose that $\Omega$ is a minimal subshift. Then,
$R_\mathrm{max}(\Omega) = \max_{\omega \in \Omega} R(\omega)$ and
$T_\mathrm{max}(\Omega) = \max_{\omega \in \Omega} T(\omega)$ both
exist, as elements of $[1,\infty]$. Moreover, the sets $\{ \omega
\in \Omega : R(\omega) = R_\mathrm{max}(\Omega) \}$ and $\{ \omega
\in \Omega : T(\omega) = T_\mathrm{max}(\Omega) \}$ are residual.
\end{prop}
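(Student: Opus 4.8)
The plan is to treat $R$ in detail; the argument for $T$ is identical once one replaces the ``one-sided, based at position $1$'' window by the ``two-sided, centered at the origin'' window dictated by the definition of $T_n$. Set $\beta = \sup_{\omega \in \Omega} R(\omega) \in [1,\infty]$, which plainly exists. The whole statement follows once I prove that for every $c$ admitting a witness (that is, every $c<\beta$, and every finite $c$ when $\beta=\infty$) the set $\{\omega \in \Omega : R(\omega) \ge c\}$ is residual. Indeed, since $R(\omega) \le \beta$ always, the maximizer set equals $\{R \ge \beta\}$, and writing $\{R \ge \beta\} = \bigcap_i \{R \ge \beta - 1/i\}$ (or $\bigcap_i \{R \ge i\}$ if $\beta=\infty$) exhibits it as a countable intersection of residual sets, hence residual; being residual in the compact, and therefore Baire, space $\Omega$ it is nonempty, so the supremum is attained and $R_{\mathrm{max}}(\Omega) = \max_{\omega} R(\omega) = \beta$.

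Next I would reformulate $\{R \ge c\}$ so that Baire's theorem applies. Since the value $R_n(\omega)$ is determined by finitely many coordinates of $\omega$, each set $\{\omega : R_n(\omega) > t\}$ is clopen, so $O_{N,j} := \bigcup_{n \ge N} \{ \omega : R_n(\omega) > c - 1/j \}$ is open. A routine manipulation of the $\limsup$ then gives
$$
\{\omega \in \Omega : R(\omega) \ge c\} = \bigcap_{N \ge 1} \bigcap_{j \ge 1} O_{N,j},
$$
the point being that $R(\omega)=\limsup_n R_n(\omega) \ge c$ holds exactly when, for every $j$, one has $R_n(\omega) > c - 1/j$ for infinitely many $n$ (forward using $c>c-1/j$, backward using $\sup_j(c-1/j)=c$). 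Thus it suffices to show every $O_{N,j}$ is dense. The crucial simplification is that density of $O_{N,j}$ only requires, for each target cylinder, a \emph{single} scale $n \ge N$ at which the constructed point is highly repetitive, so the delicate $\limsup$, which asks for infinitely many good scales at once, never has to be engineered for one fixed sequence.

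To prove density of $O_{N,j}$, fix a nonempty cylinder, which I may take to be $[v] = \{\omega : \omega_{-l}\cdots\omega_l = v\}$ with $v \in \mathcal{W}_{2l+1}(\Omega)$, and fix a witness $\omega_0$ with $R(\omega_0) \ge c$. Here I invoke the key structural fact that a minimal subshift is \emph{uniformly recurrent}: there is $P = P(v)$ so that $v$ occurs in every factor of length $P$ of every element of $\Omega$; in particular $v$ occurs in $(\omega_0)_1 \cdots (\omega_0)_P$, say with center $s$, where $0 \le s \le P$. Since $\limsup_n R_n(\omega_0) \ge c$, there are arbitrarily large $n$ with $R_n(\omega_0) > c - \tfrac{1}{2j}$, meaning the prefix $(\omega_0)_1 \cdots (\omega_0)_{n+m}$ has period $n$ with $1 + m/n > c - \tfrac{1}{2j}$; choose such an $n$ with moreover $n \ge \max(N, 2Pj)$. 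Setting $\omega := S^{s}\omega_0 \in [v] \cap \Omega$, the relation $(\omega_0)_i = (\omega_0)_{i+n}$ for $1 \le i \le m$ becomes $\omega_i = \omega_{i+n}$ for $1-s \le i \le m-s$, and since $s \ge 0$ this range contains the initial segment $1 \le i \le m-s$. Hence $R_n(\omega) \ge R_n(\omega_0) - s/n > (c - \tfrac{1}{2j}) - \tfrac{P}{n} \ge c - \tfrac1j$, so $\omega \in O_{N,j} \cap [v]$, proving density.

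The one genuinely load-bearing step, and the one I expect to be the main obstacle to write out carefully, is this displacement estimate: the constraint ``$v$ sits at the origin'' must be reconciled with ``a long period-$n$ repetition sits at the origin'' at the cost of only a bounded shift. Uniform recurrence is exactly what supplies the bound $s \le P(v)$, and it is the only place minimality enters. For $T$ the same construction applies, except one uses an occurrence of $v$ near the center (the half-integer $\tfrac12$) of the symmetric period-$n$ block $(\omega_0)_{1-m}\cdots(\omega_0)_{n+m}$ witnessing $T_n(\omega_0)$. Uniform recurrence again produces such an occurrence within distance $P$ of the center, now with $|s| \le P$ and with the symmetric range forcing the loss $T_n(\omega) \ge T_n(\omega_0) - |s|/n$; the centered repetition exponent is thereby preserved up to an $O(1/n)$ error, yielding density of the analogous open sets and hence residuality of $\{\omega : T(\omega) = T_{\mathrm{max}}(\Omega)\}$.
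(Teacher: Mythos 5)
Your proof is correct, and it takes a route that differs from the paper's both in its decomposition and in the mechanism through which minimality enters. The paper fixes a witness $\hat\omega$ and shows $\{\omega : T(\omega)\ge T(\hat\omega)\}$ is residual by constructing a dense $G_\delta$ \emph{inside} it: for each central word $w$ it builds an extension word $E(w,m)$ --- a long centered block of a shifted copy of $\hat\omega$ that has $w$ as its central subword and carries a high finite-word repetition exponent --- and takes $\mathcal{T}=\bigcap_{m}\bigcup_{n\ge m}\bigcup_{w}[E(w,m)]$; it then intersects over a sequence of witnesses $\hat\omega^{(k)}$ approaching the supremum. The shift used there to center $w$ at the origin can be arbitrarily large, so that construction leans on the exact shift-invariance of the limsup quantity $T$ (and, for $R$, on the monotonicity $R(S\omega)\ge R(\omega)$). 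You instead exhibit the level set $\{R\ge c\}$ itself as the countable intersection of the open sets $O_{N,j}$ defined by the finite-scale conditions, intersect over levels $c$ rather than over witnesses, and prove density by shifting the witness only a \emph{bounded} amount $s\le P(v)$ to a nearby occurrence of $v$ supplied by uniform recurrence, paying the explicit price $s/n$ in $R_n$ (respectively $|s|/n$ in $T_n$). Both arguments share the same skeleton --- Baire category plus shifted copies of a witness --- but your bounded-shift bookkeeping buys two things: the argument is symmetric between $R$ and $T$ (no invariance asymmetry to discuss), and the parameter $N$ in $O_{N,j}$ forces the witnessing scales to tend to infinity, a point the paper leaves implicit in ``take a sufficiently long finite piece'' (a high repetition exponent at one fixed scale does not by itself make the limsup large). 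What the paper's formulation buys in exchange is that density of its open sets is immediate from the inclusion $[E(w,m)]\subseteq[w]$, and the extension-word language is adapted to the uniformity over all $w$ of a given length that the authors point out. Your edge-case handling (the case $m<s$, and the cases $\beta=\infty$ or $\beta=1$ in the reduction) is also sound.
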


\begin{proof}
We prove the two statements for two-sided pinned repetitions. The
one-sided case may be treated analogously.

Note that it suffices to show that, for any $\hat \omega \in
\Omega$, the set $M(\hat \omega) = \{ \omega \in \Omega :
T(\omega) \ge T(\hat \omega) \}$ is residual. Indeed, once this is
shown, one may choose a sequence in $\{ \hat \omega^{(k)} \}
\subset \Omega$ such that $T(\hat \omega^{(k)}) \to \sup_{\omega
\in \Omega} T (\omega)$ and then consider the residual set $M =
\bigcap_{k \ge 1} M(\hat \omega^{(k)})$. By construction,
$T(\omega) = \sup_{\omega \in \Omega} T (\omega)$ for every
$\omega \in M$, and hence the $\sup$ is a $\max$, and since the
set $\{ \omega \in \Omega : T(\omega) = T_\mathrm{max}(\Omega) \}$
contains $M$, it is residual.

So let $\hat \omega \in \Omega$ be given. Choose a finite or
countable strictly increasing sequence $t_1, t_2 , \ldots$ with
$T(\hat \omega) = \sup_m t_m$. Fix $m$. By minimality, for each
length $l$, there is $N_l$ such that every word in
$\mathcal{W}_{N_l}(\Omega)$ contains all words from
$\mathcal{W}_l(\Omega)$ as subwords. Thus, it is possible to find,
for each $w \in \mathcal{W}_{2n+1}(\Omega)$ a word $E(w,m) \in
\mathcal{W}(\Omega)$ of odd length that has $w$ as its central
subword of length $2n+1$ and obeys\footnote{While $T$ was defined
above only for two-sided infinite words, a completely analogous
definition can be given for finite words, where the central
position plays the role of the origin.}
\begin{equation}\label{rewmest}
T(E(w,m)) \ge t_m - \frac{1}{n}.
\end{equation}
We may simply shift $\hat \omega$ until the first occurrence of
$w$ is centered at the origin and then take a sufficiently long
finite piece that is centered at the origin as well. By the
consequence of minimality mentioned above, it is clearly possible
to ensure an estimate of the form \eqref{rewmest} that is uniform
in $w \in \mathcal{W}_{2n+1}(\Omega)$.

Consider the open set
$$
\bigcup_{w \in \mathcal{W}_{2n+1}(\Omega)} [E(w,m)],
$$
where, for a word $x$ of odd length, $[x] = \{ \omega \in \Omega :
\omega_{-\frac{|x| - 1}{2}} \ldots \omega_{\frac{|x| - 1}{2}} = x
\}$. Notice that
$$
\mathcal{T}_m = \bigcup_{n \ge m} \; \bigcup_{w \in
\mathcal{W}_{2n+1}(\Omega)} [E(w,m)],
$$
is dense in $\Omega$. Consequently,
$$
\mathcal{T} = \bigcap_{m \ge 1} \mathcal{T}_m
$$
is a dense $G_\delta$ subset of $\Omega$ and for every $\omega \in
\mathcal{T}$, we have by construction $T(\omega) \ge T(\hat
\omega)$.
\end{proof}

\begin{prop}\label{erg}
Suppose that $\mu$ is an $S$-ergodic probability measure on
$\Omega$. Then there exist $R_\mu$ and $T_\mu$ such that
$R(\omega) = R_\mu$ and $T(\omega) = T_\mu$ for $\mu$-almost every
$\omega \in \Omega$.
\end{prop}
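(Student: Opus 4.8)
The plan is to prove that both $R$ and $T$ are Borel measurable and do not decrease under the shift, i.e.\ that $R(S\omega)\ge R(\omega)$ and $T(S\omega)\ge T(\omega)$ for \emph{every} $\omega\in\Omega$; ergodicity will then force each of them to be $\mu$-almost everywhere constant. Measurability is immediate: for fixed $n$ and $m$ the conditions defining $R_n$ and $T_n$ involve only finitely many coordinates, so $R_n,T_n$ are measurable, and hence so are the $\limsup$'s $R,T$, viewed as $[1,\infty]$-valued functions.

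The combinatorial core is the shift-monotonicity. Setting $M_n(\omega)=\{j\in\Z:\omega_j=\omega_{j+n}\}$, one rewrites $R_n(\omega)=1+\frac1n\sup\{m\ge 0:[1,m]\subseteq M_n(\omega)\}$ and $T_n(\omega)=1+\frac1n\sup\{m\ge 0:[1-m,m]\subseteq M_n(\omega)\}$, whereas applying $S$ replaces $M_n(\omega)$ by the translate $M_n(\omega)-1$. From this I would derive, for every $\omega$ and every $n$, the estimates $R_n(S\omega)\ge R_n(\omega)-\frac1n$ and $T_n(S\omega)\ge T_n(\omega)-\frac1n$, which express that a unit shift can shorten the relevant (one-sided, resp.\ centered) matched run by at most one symbol. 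The delicate point, which I expect to be the main obstacle, is the centered quantity $T$: the shift moves the center of the symmetric window from $1/2$ to $3/2$, and one must check that the maximal centered run survives up to length one less, i.e.\ that $[3-m,m]\subseteq[1-m,m]\subseteq M_n(\omega)$ whenever $[1-m,m]\subseteq M_n(\omega)$. Passing to the $\limsup$ and using $1/n\to 0$ then yields $R(S\omega)\ge R(\omega)$ and $T(S\omega)\ge T(\omega)$.

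Finally, for $f\in\{R,T\}$ and any $c\in\R$, the pointwise inequality $f\circ S\ge f$ gives $\{f>c\}\subseteq S^{-1}\{f>c\}$. Since $\mu$ is $S$-invariant, these two sets have equal measure, so they agree up to a $\mu$-null set; hence $\{f>c\}$ is invariant modulo $\mu$, and ergodicity forces $\mu(\{f>c\})\in\{0,1\}$. Taking $R_\mu$ (resp.\ $T_\mu$) to be the supremum of all $c$ for which the super-level set has full measure, and allowing the value $+\infty$, we conclude $R=R_\mu$ (resp.\ $T=T_\mu$) $\mu$-almost everywhere.
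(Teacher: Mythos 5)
Your proof is correct and takes essentially the same approach as the paper: the paper's three-line argument rests precisely on the shift-monotonicity you establish (the paper notes that $T$ is exactly invariant, while you use only the one-sided inequality $T(S\omega)\ge T(\omega)$, which suffices and lets you treat $R$ and $T$ uniformly) together with the fact that a measurable function satisfying $f\circ S\ge f$ is $\mu$-a.e.\ constant under an ergodic invariant measure. Your write-up simply supplies the details the paper leaves implicit, namely measurability and the super-level-set argument behind the step ``$R(S\omega)\ge R(\omega)$ implies $R(S\omega)=R(\omega)$ $\mu$-a.e.''
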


\begin{proof}
Clearly, $T(\cdot)$ is invariant and hence it is $\mu$-almost
surely constant. While $R(\cdot)$ may not be globally invariant,
we always have the inequality $R(S \omega) \ge R(\omega)$ for
every $\omega \in \Omega$. This implies that $R(S \omega) =
R(\omega)$ for $\mu$-almost every $\omega$ and hence $R(\omega)$
is $\mu$-almost surely constant.
\end{proof}

\noindent\textit{Remark.} There are some related results in
\cite{bhz}. They prove that, for minimal subshifts, $R(\cdot)$
attains its maximum (without showing that it does so on a residual
set). They also establish the almost sure constancy of $R(\cdot)$
with respect to any ergodic measure under the assumption of
minimality and sublinear block complexity  (\cite{bhz}, Proposition 2.1).
Our result (Proposition \ref{erg}) holds in complete generality,
and its proof is very short.

\medskip

In the remainder of the paper, we study pinned repetitions in
symbolic flows that are generated by coding certain specific
transformations of finite-dimensional tori. In fact, given the
space allotment, we will focus on two such classes -- interval
exchange transformations and quadratic polynomials arising in the
study of skew-shifts. We intend to continue our study of pinned
repetitions in symbolic flows in a future work. We would also like
to point out that the present study is related to our recent
papers \cite{bd1,bd2} on the repetition property for dynamical
systems on general compact metric spaces
(which are not necessarily totally disconnected).

\section{Preliminaries}

In this section, we present several results that will be useful
later in our study of pinned repetitions in certain specific
models.

\subsection{Irrational Rotations of the Circle}

For $x \in \R$, we write $\langle x \rangle = \mathrm{dist} \, (x
, \Z)$. Notice that $d(x,y) = \langle x - y \rangle$ is a metric
on $\T = \R / \Z$.

Recall that the Farey sequence of order $n$ is the sequence of
reduced fractions between $0$ and $1$ which have denominators less
than or equal to $n$, arranged in order of increasing size. Thus,
for example, $F_1 = \left\{ \frac01, \frac11 \right\}$, $F_2 =
\left\{ \frac01, \frac12, \frac11 \right\}$, $F_3 = \left\{
\frac01, \frac13, \frac12, \frac23, \frac11 \right\}$, $F_4 =
\left\{ \frac01, \frac14, \frac13, \frac12, \frac23, \frac34,
\frac11 \right\}$. If $\frac{a}{b}$ and $\frac{c}{d}$ are
neighbors in a Farey sequence, then $b+d > n$ and $|\frac{a}{b} -
\frac{c}{d}| = \frac{1}{bd}$; see, for example, \cite{hw}.

\begin{lemma}\label{fareylemma}
Let $x,y \in \T$ and $u > 0$ be such that for some $n \ge 2$, the
set $\{ x+y, 2x+y, 3x+y, \ldots, nx + y \}$ does not intersect an
arc $J \subseteq \T$ of length $|J| = u$. Then for some positive
integer $q < \min \left\{ n , \frac{2}{u} \right\}$, we have
$\langle q x \rangle < \frac{1}{n}$.
\end{lemma}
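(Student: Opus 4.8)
The plan is to translate the hypothesis into a statement about the maximal gap of the orbit and then read off the approximation $q$ from the Farey/three-distance structure of $x$. First I would note that shifting by $y$ does not change the cyclic gaps, so the hypothesis that $\{x+y,2x+y,\dots,nx+y\}$ misses an arc of length $u$ is equivalent to the statement that the $n$ points $\{x,2x,\dots,nx\}$ leave a gap of length at least $u$ somewhere on $\T$: any arc of length $u$ containing no point must sit inside a single complementary arc, so the maximal gap $g$ between cyclically consecutive orbit points satisfies $g\ge u$. Thus it suffices to bound, in terms of $g$, a denominator producing a good approximation of $x$.

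Next I would introduce the two one-sided closest returns of the rotation. Let $q_+\in\{1,\dots,n\}$ realize the smallest clockwise distance from the origin to an orbit point and $q_-$ the analogous index on the other side, so that $\delta_+=\langle q_+x\rangle$ and $\delta_-=\langle q_-x\rangle$ are the two one-sided distances. These are exactly the denominators of the Farey neighbors $\frac{a}{q_+}<x<\frac{c}{q_-}$ in $F_n$, and the facts recalled just before the lemma supply the two relations I will use: $q_++q_->n$, and, rewriting $\frac{c}{q_-}-\frac{a}{q_+}=\frac1{q_+q_-}$, the unimodular identity
\[ q_-\,\delta_+ + q_+\,\delta_- = 1. \]
The three-distance theorem identifies the maximal gap of the orbit as $\delta_++\delta_-$, so the reduction above yields $\delta_++\delta_-\ge u$.

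With these in hand the extraction is short. Assuming without loss of generality that $\delta_+\le\delta_-$, so that $\delta_-\ge u/2$, the unimodular identity forces $q_+\delta_-\le 1$, hence $q_+\le 2/u$, while $q_+\le n$ holds by construction. Since $q_+$ realizes the globally closest return, Dirichlet's pigeonhole (applied to $0,x,2x,\dots,nx$) guarantees $\langle q_+x\rangle=\delta_+<1/n$. Taking $q=q_+$ then delivers the desired $q<\min\{n,2/u\}$ with $\langle qx\rangle<1/n$.

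The step I expect to be the main obstacle is the precise bookkeeping around the three-distance theorem and the constants: verifying that the maximal gap is genuinely $\delta_++\delta_-$ (rather than merely bounded by it), and securing the strict inequalities $q<n$ and $q<2/u$ together with $\langle qx\rangle<1/n$, in place of the slightly weaker $2/n$ or $1/(n-1)$ that a careless count produces. I would handle this by deriving the needed gap facts directly from the quoted Farey properties ($q_++q_->n$ and neighbor spacing $\frac1{q_+q_-}$), which keeps the argument self-contained and pins down the off-by-one terms exactly.
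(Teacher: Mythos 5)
Your route --- the three-distance structure at the origin, the unimodular identity $q_-\delta_+ + q_+\delta_- = 1$, and Dirichlet's pigeonhole --- is genuinely different from the paper's proof, which places $x$ between its Farey neighbors of order $n-1$, uses the mediant to get $\left| x - \frac{p}{q} \right| < \frac{1}{qn}$ for one neighbor $\frac{p}{q}$, and then obtains $q < \frac{2}{u}$ by noting that the points $kx$, $1 \le k \le q$, are displaced from the equally spaced points $\frac{kp}{q}$ all in the same direction by less than $\frac{1}{n}$, so their maximal gap is below $\frac{2}{q}$. Most of your steps do survive scrutiny: the reduction to $y = 0$ is fine; your worry about ``maximal gap $= \delta_+ + \delta_-$'' is actually unfounded, since the gap of $\{kx : 1 \le k \le n\}$ straddling the origin has length exactly $\delta_+ + \delta_-$, while the three-distance theorem applied to $\{kx : 0 \le k \le n\}$ shows every other gap is at most that; the identity follows from unimodularity of Farey neighbors as you say (though the identification of the one-sided closest returns in $\{1,\dots,n\}$ with the denominators of the $F_n$-neighbors of $x$ is itself a classical fact you assert rather than prove); and Dirichlet does give $\delta_+ < \frac{1}{n}$. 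The genuine gap is at the end: the lemma demands $q < n$ \emph{strictly}, and your candidate $q = q_+$ satisfies only $q_+ \le n$, with equality genuinely possible. Take $x = \frac{1}{n} + \varepsilon$ with $\varepsilon > 0$ irrational and tiny: the orbit misses an arc of length $u = \frac{1}{n}$, so $\min \left\{ n, \frac{2}{u} \right\} = n$; the $F_n$-neighbors of $x$ are $\frac{1}{n} < x < \frac{1}{n-1}$, and $\delta_+ = n\varepsilon < \frac{1}{n} - (n-1)\varepsilon = \delta_-$, so your prescription outputs $q_+ = n$, violating $q < n$. The lemma is true here, but the witness is $q_- = n-1$.

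So the defect is not the off-by-one bookkeeping you anticipated, and no rederivation of gap facts rescues the choice $q = q_+$; a correct completion must branch. If $q_+ < n$, your argument works (and $q_+ < \frac{2}{u}$ is indeed strict, since $q_+ = \frac{2}{u}$ would force $\delta_+ = 0$ while also $\delta_+ \ge \frac{u}{2} > 0$). If $q_+ = n$, switch to $q = q_-$ and check three things: (i) $q_- < n$, because unimodularity gives $\gcd(q_+, q_-) = 1$; (ii) $\langle q_- x \rangle = \delta_- = \frac{1 - q_- \delta_+}{n} < \frac{1}{n}$ by your identity; (iii) $q_- < \frac{2}{u}$, because now $u \le \delta_+ + \delta_- < \frac{2}{n}$, so $\frac{2}{u} > n > q_-$. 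The paper's proof avoids this branching entirely by working with the Farey sequence of order $n-1$, which caps all denominators at $n-1$ from the outset while still giving a neighbor-denominator sum of at least $n$, which is exactly what the mediant estimate needs. (A shared caveat: both arguments, and the statement itself, tacitly assume $x$ irrational --- for $x = \frac12$, $n = 2$ and $u$ slightly below $\frac12$ the hypothesis holds but no $q < 2$ has $\langle qx \rangle < \frac12$; this is harmless in the intended application, where $x = 2\alpha n$ with $\alpha$ irrational.)
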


\begin{proof}
Of course, we can assume without loss of generality that $y = 0$.
Denote $S = \left\{ kx : 1 \le k \le \left\lfloor \frac{2}{u}
\right\rfloor \right\}$ and consider the position of $x$ relative
to the Farey sequence of order $n-1$: $\frac{r_1}{s_1} \le x \le
\frac{r_2}{s_2}$. By the well-known properties of the Farey
sequence of order $n-1$ recalled above, it follows that $s_j < n$
for $j = 1,2$ and $s_1 + s_2 \ge n$.

The point $\frac{r_1 + r_2}{s_1 + s_2}$ subdivides the interval
$[\frac{r_1}{s_1} , \frac{r_2}{s_2}]$. Suppose $x \in
[\frac{r_1}{s_1} , \frac{r_1 + r_2}{s_1 + s_2}]$. Then,
$$
\left| x - \frac{r_1}{s_1} \right| \le \frac{r_1 + r_2}{s_1 + s_2}
- \frac{r_1}{s_1} = \frac{1}{s_1(s_1 + s_2)} < \frac{1}{s_1 n }.
$$
The case $x \in [\frac{r_1 + r_2}{s_1 + s_2} , \frac{r_2}{s_2}]$
is analogous. Consequently, for $\frac{p}{q}$ equal to either
$\frac{r_1}{s_1}$ or $\frac{r_2}{s_2}$, we have that
\begin{equation}\label{xrsest}
\left| x - \frac{p}{q} \right| < \frac{1}{q n}.
\end{equation}

In particular, we must have $q < n$ and $\langle q x \rangle <
\frac{1}{n}$. Moreover, since $\frac{p}{q}$ is a reduced fraction,
the estimate \eqref{xrsest} implies that the maximal gap (in $\T$)
of $\{ kx : 1 \le k \le q \}$ is bounded from above by
$\frac{2}{q}$. By assumption, we therefore must have $u <
\frac{2}{q}$, that is, $q < \frac{2}{u}$.
\end{proof}

\subsection{Continued Fraction Expansion}

Let us recall some basic results from the theory of continued
fractions; compare \cite{hw,Lang}. Given an irrational number
$\alpha \in \T$, there are uniquely determined $a_n \in \Z_+$, $n
\ge 1$ such that
$$
\alpha = \cfrac{1}{a_1+ \cfrac{1}{a_2 + \cfrac{1}{a_3 + \cdots}}}
\, .
$$
Truncation of this infinite continued fraction expansion after $k$
steps yields the $k$-th \textit{convergent} $\frac{p_k}{q_k}$. We
have the following two-sided estimate for the quality of
approximation of $\alpha$ by the $k$-th convergent:
$$
\frac{1}{q_k ( q_k + q_{k+1} )} < \left| \alpha - \frac{p_k}{q_k}
\right| < \frac{1}{q_k q_{k+1}}.
$$
The numerators and denominators of the convergents obey the
following recursive relations:
\begin{eqnarray*}
p_0 = 0, & p_1 = 1, & p_k = a_k p_{k-1} + p_{k-2} \text{ for } k
\ge 2, \\
q_0 = 1, & q_1 = a_1, & q_k = a_k q_{k-1} + q_{k-2} \text{ for } k
\ge 2.
\end{eqnarray*}

\subsection{Discrepancy Estimates for Quadratic Polynomials}

In this subsection we discuss uniform distribution properties of
quadratic polynomials. More precisely, given $\alpha,\beta,\gamma
\in \T$, we consider the points
\begin{equation}\label{qpxn}
x_n = \alpha n^2 + \beta n + \gamma \in \T.
\end{equation}
The numbers
$$
D_N = D_N(\alpha,\beta,\gamma) = \sup_{\mathrm{intervals} \, I
\subseteq \T} \; \left| \frac{1}{N} \# \{ n : 1 \le n \le N , \;
x_n \in I \} - \mathrm{Leb}(I) \right|
$$
measure the quality of uniform distribution of the given sequence
and are called its \textit{discrepancy}.

\begin{theorem}\label{qpdiscest}
Suppose $\alpha \in \T$ is irrational and
$$
\alpha = \frac{p}{q} + \frac{\theta}{q^2},
$$
where $p$ and $q$ are relatively prime and $|\theta| \le 1$. Then,
for $\beta, \gamma \in \T$ and $\varepsilon > 0$ arbitrary, we
have
$$
D_q(\alpha,\beta,\gamma) < C_\varepsilon q^{- \frac{1}{3} -
\varepsilon}
$$
with some constant $C_\varepsilon$ that only depends on
$\varepsilon$. In particular, the set $\{x_1 , \ldots , x_q\}$
intersects every interval $I \subseteq \T$ of length at least
$C_\varepsilon q^{- \frac{1}{3} - \varepsilon}$.
\end{theorem}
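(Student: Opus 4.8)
The plan is to reduce the discrepancy to exponential (Weyl) sums via the Erd\H{o}s--Tur\'an inequality, and then to bound those sums by Weyl's differencing method, feeding in the hypothesis $\alpha = p/q + \theta/q^2$ as the sole arithmetic input. Writing $e(t) = e^{2\pi i t}$, the Erd\H{o}s--Tur\'an inequality gives, for every integer $H \ge 1$,
\[
D_q \ll \frac{1}{H} + \frac{1}{q}\sum_{h=1}^{H}\frac{1}{h}\,|S(h)|,
\qquad
S(h) = \sum_{n=1}^{q} e\bigl(h(\alpha n^2 + \beta n + \gamma)\bigr).
\]
Since $|S(h)|$ is unaffected by the constant phase $h\gamma$, everything reduces to bounding the quadratic Weyl sums $S(h)$ uniformly for $1 \le h \le H$, after which one optimizes over $H$.

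The heart of the argument is a uniform bound on $S(h)$. Squaring and applying Weyl differencing (summing the inner geometric progression in the difference variable $r = n-m$) yields
\[
|S(h)|^2 \ll \sum_{|r| < q} \min\Bigl(q, \frac{1}{\langle 2h\alpha r\rangle}\Bigr).
\]
Here I would substitute $2h\alpha r = 2hpr/q + 2h\theta r/q^2$ and note that, since $|r| < q$ and $|\theta| \le 1$, the perturbation $2h\theta r/q^2$ never exceeds $2h/q$ in size. Because $\gcd(p,q) = 1$, as $r$ runs through $1, \ldots, q-1$ the values $2hpr/q \bmod 1$ sweep out the fractions with denominator $q$, so the terms for which $\langle 2hpr/q\rangle$ stays above $O(h/q)$ are immune to the perturbation and contribute the expected amount $O(q\log q)$, uniformly in $h$. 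The only dangerous terms are the $O(h)$ ``resonances'' for which $2hpr/q$ lies within $O(h/q)$ of an integer; bounding each of these by $q$ gives
\[
|S(h)|^2 \ll hq + q\log q,
\qquad\text{so}\qquad
|S(h)| \ll (hq)^{1/2} + (q\log q)^{1/2},
\]
uniformly for all $h \ge 1$, with only minor bookkeeping (a multiplicity factor) when $\gcd(2h,q) > 1$. I expect this control of the resonant differences --- showing that the perturbation $\theta/q^2$ can corrupt at most $O(h)$ of them --- to be the main obstacle, and the place where logarithmic and divisor-function factors of size $q^{\varepsilon}$ enter.

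Feeding this back into the Erd\H{o}s--Tur\'an bound gives
\[
D_q \ll \frac{1}{H} + q^{-1/2}\sum_{h=1}^{H} h^{-1/2} + q^{-1/2}(\log q)^{1/2}\sum_{h=1}^{H}\frac{1}{h}
\ll \frac{1}{H} + q^{-1/2}H^{1/2} + q^{-1/2+\varepsilon}.
\]
The first two terms balance at $H \asymp q^{1/3}$, which is precisely the source of the exponent $\tfrac13$; both then equal $q^{-1/3}$, the third term is negligible, and absorbing the logarithmic and divisor factors into an arbitrary $q^{\varepsilon}$ yields the stated discrepancy bound. Finally, the intersection statement is immediate from the definition of $D_q$: any interval $I \subseteq \T$ with $\mathrm{Leb}(I) > D_q$ satisfies $\frac{1}{q}\#\{1 \le n \le q : x_n \in I\} > 0$, and hence contains at least one of the points $x_1, \ldots, x_q$.
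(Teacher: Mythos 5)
Your proposal is correct and takes essentially the same route as the paper: Erd\H{o}s--Tur\'an reduction to quadratic Weyl sums, Weyl differencing, use of $\alpha = \frac{p}{q} + \frac{\theta}{q^2}$ to control $\sum_r \min\left(q, \frac{1}{\langle 2h\alpha r\rangle}\right)$, and balancing at $H \asymp q^{1/3}$; the only difference is that you carry out the resonance-counting estimate by hand, where the paper instead cites Lang for the differencing lemma and Korobov's Eqn.~(153) for the $\min$-sum bound. One incidental remark: both your argument and the paper's actually yield $D_q \lesssim q^{-1/3+\varepsilon}$ (yours even $q^{-1/3}$ up to logarithms), not the literally stated $q^{-\frac{1}{3}-\varepsilon}$, which appears to be a sign typo in the theorem.
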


We will use the following version of the Erd\"os-Tur\'an Theorem,
which holds in fact for arbitrary real numbers $x_1 , \ldots ,
x_N$; compare \cite[pp.~112--114]{KN}.

\begin{theorem}[Erd\"os-Tur\'an 1948]
There is a universal constant $\tilde C$ such that for every $m
\in \Z_+$,
$$
D_N \le \tilde C \left( \frac{1}{m} + \sum_{h = 1}^m \frac{1}{h}
\left| \frac{1}{N} \sum_{n = 1}^N e^{2 \pi i h x_n} \right|
\right).
$$
\end{theorem}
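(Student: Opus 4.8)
The plan is to bound the discrepancy by sandwiching the indicator function of each test interval between trigonometric majorant and minorant polynomials of degree at most $m$, and then to evaluate the resulting averages through the given exponential sums. Writing $E_N(h) = \frac{1}{N}\sum_{n=1}^N e^{2\pi i h x_n}$ and recalling that $D_N = \sup_I \left| \frac{1}{N} \#\{ n \le N : x_n \in I \} - \mathrm{Leb}(I) \right|$ with the supremum taken over arcs $I \subseteq \T$, I would first record that for such an arc the Fourier coefficients of $\chi_I$ satisfy $\widehat{\chi_I}(0) = \mathrm{Leb}(I)$ and $|\widehat{\chi_I}(h)| \le \frac{1}{\pi|h|}$ for $h \ne 0$, where $\widehat{f}(h) = \int_\T f(x) e^{-2\pi i h x}\,dx$.

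The heart of the argument is the following approximation lemma: for every $m \in \Z_+$ and every arc $I \subseteq \T$ there exist trigonometric polynomials $S^\pm$ of degree at most $m$ with $S^-(x) \le \chi_I(x) \le S^+(x)$ for all $x$ and $\int_\T (S^+ - \chi_I) = \int_\T (\chi_I - S^-) = \frac{1}{m+1}$. I want to emphasize that, once these are in hand, the control on their Fourier coefficients is automatic: since $S^+ - \chi_I \ge 0$ has integral $\frac{1}{m+1}$, we get $|\widehat{S^+}(h) - \widehat{\chi_I}(h)| \le \int_\T (S^+ - \chi_I) = \frac{1}{m+1}$, hence $\widehat{S^+}(0) = \mathrm{Leb}(I) + \frac{1}{m+1}$ and $|\widehat{S^+}(h)| \le \frac{1}{\pi|h|} + \frac{1}{m+1}$ for $1 \le |h| \le m$, and likewise for $S^-$.

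With the lemma available, the remaining steps are routine bookkeeping. Because $S^+$ dominates $\chi_I$, averaging over the points and expanding the polynomial gives $\frac{1}{N}\sum_n \chi_I(x_n) \le \frac{1}{N}\sum_n S^+(x_n) = \sum_{|h| \le m} \widehat{S^+}(h)\, E_N(h)$. I would split off the $h = 0$ term, which equals $\mathrm{Leb}(I) + \frac{1}{m+1}$, bound the coefficients of the remaining terms as above, and fold the negative frequencies onto the positive ones using $|E_N(-h)| = |E_N(h)|$. Since $\frac{1}{m+1} \le \frac{1}{h}$ for $1 \le h \le m$, each coefficient $\frac{1}{\pi|h|} + \frac{1}{m+1}$ is at most $\frac{2}{h}$, which yields $\frac{1}{N}\sum_n \chi_I(x_n) - \mathrm{Leb}(I) \le \frac{1}{m} + 4\sum_{h=1}^m \frac{1}{h} |E_N(h)|$. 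The minorant $S^-$ produces the matching lower bound by the same computation, so taking the supremum over all arcs $I$ gives the theorem with, for instance, $\tilde C = 4$.

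The one genuinely nontrivial ingredient, and the step I expect to be the main obstacle, is the construction of the extremal majorant and minorant $S^\pm$ realizing $L^1$ error exactly $\frac{1}{m+1}$ with Fourier support in $[-m,m]$. This is the Beurling--Selberg extremal problem: one starts from Beurling's entire majorant of the signum function, assembles Selberg's majorant and minorant of the indicator of an interval on $\R$, and periodizes to obtain the required trigonometric polynomials on $\T$. Securing the pointwise inequalities $S^- \le \chi_I \le S^+$ simultaneously with the sharp $L^1$ gap is the delicate part; everything else in the proof is elementary manipulation of Fourier coefficients.
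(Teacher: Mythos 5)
Your proof is correct, but note that the paper does not actually prove this theorem: it is quoted as a classical result, with the proof deferred to Kuipers--Niederreiter \cite[pp.~112--114]{KN}, so the relevant comparison is with the classical argument referenced there. That argument (going back to Erd\"os and Tur\'an's 1948 paper) is elementary but computational: it smooths the discrepancy function using Fej\'er-type kernels and extracts the bound from their positivity, with fairly heavy bookkeeping. Your route is instead Selberg's proof via the Beurling--Selberg extremal functions (in the form popularized by Vaaler and Montgomery): once one has trigonometric polynomials $S^\pm$ of degree at most $m$ with $S^- \le \chi_I \le S^+$ and $\int_\T (S^+ - \chi_I)\,dx = \int_\T (\chi_I - S^-)\,dx = \frac{1}{m+1}$, the inequality follows from a few lines of Fourier bookkeeping --- and your bookkeeping is all sound: the bound $|\widehat{\chi_I}(h)| \le \frac{1}{\pi |h|}$, the transfer $|\widehat{S^\pm}(h) - \widehat{\chi_I}(h)| \le \frac{1}{m+1}$ via positivity of $S^+ - \chi_I$, the folding of negative frequencies onto positive ones, and the resulting constant $\tilde C = 4$ all check out. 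The trade-off is where the difficulty sits. Your proof outsources essentially all of it to the Beurling--Selberg construction, which you quote rather than prove; since that lemma is a standard, citable result, your argument is complete as a proof by reference, and it buys a clean explicit constant. It is worth observing, though, that the sharp value $\frac{1}{m+1}$ is more than the statement needs: since $\tilde C$ is an unspecified universal constant, any degree-$m$ one-sided approximation with $L^1$ error $O(1/m)$ would run through the identical computation, so a cruder (and easier) construction would also suffice, which is in effect what the classical self-contained proof does.
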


This estimate relates discrepancy bounds to bounds for exponential
sums. Thus the following lemma, which is closely related to a
lemma given on p.~43 of \cite{Lang}, is of interest.

\begin{lemma}\label{langlemma}
Suppose $\alpha \in \T$ is irrational, $\beta,\gamma \in \T$ are
arbitrary, and $x_n$ is given by \eqref{qpxn}. Then, we have for
$N \in \Z_+$,
$$
\left| \sum_{n = 1}^N e^{2 \pi i x_n} \right|^2 \le N +
\sum_{n=1}^N \min \left( 2N , \frac{1}{2 \langle n \alpha \rangle}
\right).
$$
\end{lemma}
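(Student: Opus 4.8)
The plan is to use the classical van der Corput/Weyl differencing device. Writing $S = \sum_{n=1}^N e^{2\pi i x_n}$, I would first expand
$$
|S|^2 = \sum_{n=1}^N \sum_{m=1}^N e^{2\pi i (x_n - x_m)},
$$
separate the diagonal $n=m$, which contributes exactly $N$, and organize the off-diagonal terms according to the difference $r = n-m$. Since the summand attached to the pair $(m,n)$ is the complex conjugate of the one attached to $(n,m)$, I can restrict to $r \ge 1$ at the cost of taking twice the real part.

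The key computation is that, for $x_n = \alpha n^2 + \beta n + \gamma$,
$$
x_{m+r} - x_m = \alpha \left( (m+r)^2 - m^2 \right) + \beta r = 2 \alpha r \, m + (\alpha r^2 + \beta r),
$$
so that, for fixed $r$, the inner sum over the admissible $m$ (an interval of at most $N-r$ integers) is a geometric series with ratio $e^{2\pi i (2 \alpha r)}$. Using $|1 - e^{2\pi i t}| = 2 |\sin \pi t| \ge 4 \langle t \rangle$, such a geometric sum of length $L \le N$ is bounded in modulus by $\min \left( L, \frac{1}{2 \langle 2\alpha r \rangle} \right)$. Collecting the diagonal together with the two symmetric halves then yields a bound of the shape
$$
|S|^2 \le N + 2 \sum_{r=1}^{N-1} \min \left( N - r , \frac{1}{2 \langle 2 \alpha r \rangle} \right),
$$
which already resembles the claim, but with the differencing frequencies $2 \alpha r$ in place of $n \alpha$, with each value weighted by a factor $2$, and with the cap $N-r$ rather than $2N$.

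The main obstacle is precisely this last reconciliation: passing from the doubled frequencies $\langle 2 \alpha r \rangle$, indexed by $1 \le r \le N-1$, to the stated single-frequency form $\sum_{n=1}^N \min \left( 2N, \frac{1}{2 \langle n \alpha \rangle} \right)$ over the full range $1 \le n \le N$. The target is engineered with two sources of slack that I would exploit: the enlarged cap $2N$ comfortably absorbs the weight $2$ against the smaller quantities $N - r \le N$, and the claimed sum runs over \emph{all} indices, so that the terms not produced by differencing may be inserted as nonnegative quantities. The genuinely delicate point, where a naive one-to-one matching $n = 2r$ breaks down, is the control of the contributions with $2r > N$: these are exactly the frequencies that fall outside the range $1 \le n \le N$ of the target, and the capped weight $N-r$ rather than the sharper size of the geometric sum must be used to keep them from overshooting. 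Verifying that this delicate regime still fits under the stated bound is, I expect, the crux of the argument.
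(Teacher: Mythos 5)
Your differencing computation is correct, and it is certainly the intended route: the paper's own ``proof'' is only a pointer to Lang's book, whose argument is exactly this Weyl differencing. Steps 1--6 of your outline validly give
$$
\Bigl|\sum_{n=1}^N e^{2\pi i x_n}\Bigr|^2 \;\le\; N + 2\sum_{r=1}^{N-1}\min\Bigl(N-r,\ \tfrac{1}{2\langle 2r\alpha\rangle}\Bigr).
$$
The gap is that you stop exactly where the stated inequality still has to be reached, and the roadmap you sketch for that step is wrong. The factor $2$ is \emph{not} ``comfortably absorbed'' by the enlarged cap: since $2\min(a,b)=\min(2a,2b)$, the doubled term equals $\min\bigl(2(N-r),\tfrac{1}{\langle 2r\alpha\rangle}\bigr)$, so whenever the reciprocal is the active minimum it is \emph{twice} the corresponding target term $\min\bigl(2N,\tfrac{1}{2\langle 2r\alpha\rangle}\bigr)$ at $n=2r$. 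This factor-two deficit occurs for every such $r$, including $2r\le N$; it is not confined to the regime $2r>N$ that you identify as the sole crux.

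More decisively, no bookkeeping can close this, because the comparison you plan to verify is false: the displayed differencing bound can strictly exceed the right-hand side of the lemma. Take $\alpha$ irrational with $|\alpha-\tfrac1{14}|<N^{-3}$ and $N$ a large multiple of $14$. Then $\langle 2r\alpha\rangle\approx 0$ precisely when $7\mid r$, while $\langle n\alpha\rangle\approx 0$ precisely when $14\mid n$, and an exact count of both sides gives
$$
N + 2\sum_{r=1}^{N-1}\min\Bigl(N-r,\tfrac{1}{2\langle 2r\alpha\rangle}\Bigr) \;=\; \tfrac{N^2}{7} + \tfrac{11}{3}N + O(1),
\qquad
N + \sum_{n=1}^{N}\min\Bigl(2N,\tfrac{1}{2\langle n\alpha\rangle}\Bigr) \;=\; \tfrac{N^2}{7} + \tfrac{493}{140}N + O(1),
$$
and $\tfrac{11}{3}>\tfrac{493}{140}$, so the differencing bound is larger by $\tfrac{61}{420}N+O(1)$ (for $\alpha$ near $\tfrac{p}{q}$ with $q$ even the excess grows like $N\log q$). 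Thus the lemma in its stated single-frequency form cannot be deduced from the absolute-value differencing bound at all: one must either retain the phases $e^{2\pi i(\alpha r^2+\beta r)}$ of the inner geometric sums, or settle for the doubled-frequency form
$$
\Bigl|\sum_{n=1}^N e^{2\pi i x_n}\Bigr|^2 \;\le\; N + \sum_{r=1}^{N-1}\min\Bigl(2N,\ \tfrac{1}{\langle 2r\alpha\rangle}\Bigr),
$$
which your computation does yield immediately (via $2\min(a,b)\le\min(2N,2b)$) and which would serve the paper's application (the Erd\H{o}s--Tur\'an/Korobov step) equally well after renaming the frequencies. As it stands, your proposal proves the latter inequality, not the lemma as stated, and the bridge you hope exists between them does not.
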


\begin{proof}
This can be proved by a slight variation of the argument given on
pp.~43--44 of \cite{Lang}.
\end{proof}

\begin{proof}[Proof of Theorem \ref{qpdiscest}.]
By the Erd\"os-Tur\'an Theorem, we have
$$
D_q \lesssim \frac{1}{m} + \sum_{h = 1}^m \frac{1}{h} \left|
\frac{1}{q} \sum_{n = 1}^q e^{2 \pi i h x_n} \right|
$$
for every $m \in \Z_+$.\footnote{We write $a \lesssim b$ for
positive $a,b$ if there is a universal constant $C$ such that $a
\le C b$.} Take $m = \lfloor q^\delta \rfloor$ for some $\delta
\in (0,1)$. Then, together with Lemma~\ref{langlemma}, we find for
any $\varepsilon
0$,
\begin{align*}
D_q & \lesssim q^{-\delta} + \sum_{h = 1}^{\lfloor q^\delta
\rfloor} \frac{1}{h} \left| \frac{1}{q} \sum_{n = 1}^q e^{2 \pi i
h x_n} \right| \\
& \le q^{-\delta} + \frac{1}{q} \sum_{h = 1}^{\lfloor q^\delta
\rfloor} \frac{1}{h} \left( q + \sum_{n=1}^q \min \left( 2q ,
\frac{1}{2 \langle h n \alpha \rangle} \right) \right)^{1/2} \\
& \lesssim q^{-\delta} + \frac{1}{q} \sum_{h = 1}^{\lfloor
q^\delta \rfloor} \frac{1}{h} \left( q + \frac{h}{\varepsilon}
\left( 1 + q^{\delta - 1} \right) q^{1+\varepsilon} \right)^{1/2}
\\
& \lesssim q^{-\delta} + q^{\frac{1}{2} \delta - \frac{1}{2} +
\frac{\varepsilon}{2}}.
\end{align*}
Here, we applied \cite[Eqn.~(153) on p.~75]{Korobov} in the third
step. In the last line, the exponents coincide when $\delta =
\frac{1 - \varepsilon}{3}$.
\end{proof}

Given an irrational $\alpha$, Theorem~\ref{qpdiscest} gives a
discrepancy estimate for those values of $N$ that appear as
denominators in the convergents associated with $\alpha$. This
will be sufficient for our purpose. In some cases (e.g.,
$\alpha$'s of Roth type), it is possible to use \cite[Theorem~6 on
p.~45]{Lang} to modify the proof so as to cover all values of $N$.
Moreover, if one is only interested in a metric result, the
following improved discrepancy estimate is of relevance.

\begin{theorem}
Suppose $\alpha \in \T$ is irrational. Then, for Lebesgue almost
every $\beta \in \T$ and every $\gamma \in \T$, we have
$$
D_N(\alpha,\beta,\gamma) < C_\varepsilon N^{-\frac12} (\log
N)^{\frac{5}{2} + \varepsilon}.
$$
\end{theorem}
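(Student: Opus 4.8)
The plan is to follow the same Erd\H{o}s--Tur\'an strategy used in the proof of Theorem~\ref{qpdiscest}, but to leverage the extra averaging freedom provided by the free parameter $\beta$ and the fact that we now want an estimate for \emph{all} $N$ (not just the convergent denominators) at the cost of passing to a full-measure set of $\beta$. The governing identity is again the Erd\H{o}s--Tur\'an bound
\begin{equation}\label{etplan}
D_N \lesssim \frac{1}{m} + \sum_{h=1}^m \frac{1}{h} \left| \frac{1}{N} \sum_{n=1}^N e^{2\pi i h x_n} \right|,
\end{equation}
with $m$ to be chosen (roughly $m \asymp N^{1/2}$, reflecting the target exponent). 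The difference from before is that I will not estimate each exponential sum deterministically; instead I will control the $L^2$-norm in $\beta$ of the right-hand side of \eqref{etplan} and then extract almost-everywhere information.

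First I would fix $N$ and $h$ and regard $\gamma$ as harmless (a pure phase that drops out of $|\cdot|^2$), so the object of interest is $g_{N,h}(\beta) = \sum_{n=1}^N e^{2\pi i h(\alpha n^2 + \beta n)}$ as a function of $\beta$. Expanding $\int_{\T} |g_{N,h}(\beta)|^2 \, d\beta$ and using orthogonality of the characters $e^{2\pi i h n \beta}$ in $\beta$, the off-diagonal terms vanish and one gets exactly $\int_{\T} |g_{N,h}|^2 \, d\beta = N$. Thus in $L^2(\beta)$ each normalized sum has size $N^{-1/2}$ on average, which already suggests the $N^{-1/2}$ main term. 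Summing \eqref{etplan} against $1/h$ and taking $L^2$-norms via Minkowski's inequality then produces a bound of the shape $\| D_N \|_{L^2(d\beta)} \lesssim m^{-1} + N^{-1/2}(\log m)$, and choosing $m \asymp N^{1/2}$ balances these into $\lesssim N^{-1/2}\log N$.

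The step I expect to be the main obstacle is upgrading this mean-square control into a pointwise almost-everywhere bound valid \emph{simultaneously for all $N$}, which is where the extra logarithmic powers $(\log N)^{5/2+\varepsilon}$ are spent. The natural route is a Borel--Cantelli / Gal--Koksma type argument: from the second-moment bound one deduces, for a fixed weight, that $\sum_N \big(N^{1/2}(\log N)^{-s} D_N\big)^2 P(N)$ has finite integral over $\beta$ for a suitable summable factor $P$ and exponent $s$, so that $N^{1/2}(\log N)^{-s} D_N \to 0$ almost surely along a subsequence; one then fills the gaps between subsequence indices using the monotonicity-type estimate $D_{N'} \le \frac{N}{N'} D_N + O(\tfrac{N-N'}{N'})$ for $N' \le N$. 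Carefully tracking how many logarithmic factors each of these two maneuvers (the metric summation over $h$ and the interpolation between sampling times) costs is precisely what yields the exponent $\tfrac52 + \varepsilon$ rather than something smaller; making these log losses explicit is the delicate bookkeeping at the heart of the proof. Throughout, the uniformity in $\gamma$ is free, since $\gamma$ never enters the $L^2(\beta)$ computation.
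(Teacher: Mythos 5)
The first thing to say is that the paper contains no proof of this theorem to compare against: the authors explicitly decline to prove it and refer to \cite[Theorem~1.158 on p.~157]{DT}, which is the classical Erd\H{o}s--G\'al--Koksma metric discrepancy theorem for sequences $(a_n\beta+b_n)$ with distinct integers $a_n$ (here $a_n=n$, $b_n=\alpha n^2+\gamma$). So what you are proposing is in effect a proof of the black-boxed citation, and your overall strategy is indeed the standard route to it. Your first two steps are correct: the orthogonality identity $\int_\T \bigl|\sum_{n=1}^N e^{2\pi i h(\alpha n^2+\beta n)}\bigr|^2\,d\beta = N$, the Minkowski step giving $\|D_N\|_{L^2(d\beta)} \lesssim m^{-1}+N^{-1/2}\log m$, the choice $m\asymp N^{1/2}$, and the observation that $\gamma$ only contributes a unimodular phase (in fact $D_N(\alpha,\beta,\gamma)$ is exactly independent of $\gamma$, since a common rotation of all points does not change the supremum over intervals of $\T$).

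The genuine gap is in the step you yourself flag as the main obstacle, and as written your mechanism fails. The second moment gives $\int_\T D_N^2\,d\beta \lesssim N^{-1}(\log N)^2$, so Chebyshev yields $\mathrm{Leb}\{\beta: D_N > N^{-1/2}(\log N)^s\} \lesssim (\log N)^{2-2s}$. Your interpolation inequality $D_{N'}\le \frac{N}{N'}D_N + O\bigl(\frac{N-N'}{N'}\bigr)$ forces the sampling times $N_k$ to satisfy $N_{k+1}-N_k \lesssim N_k^{1/2}(\log N_k)^{C}$, since otherwise the additive error alone already exceeds the target $N^{-1/2}(\log N)^{5/2+\varepsilon}$. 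Such a subsequence grows at most polynomially, hence $\log N_k \asymp \log k$, and then $\sum_k (\log N_k)^{2-2s} \asymp \sum_k (\log k)^{2-2s} = \infty$ for \emph{every} $s$; inserting a summable weight $P$ does not help, because summability of $\sum_k P(N_k)$ together with the requirement that $P(N_k)^{-1/2}$ be at most polylogarithmic are incompatible along a polynomially growing subsequence. Conversely, letting $N_k$ grow geometrically makes the Chebyshev tails summable but makes the interpolation error $\frac{N_{k+1}-N_k}{N_k}\asymp 1$, which is fatal for an $N^{-1/2}$-size bound. So second moments at isolated times plus plain Borel--Cantelli can never close the argument, no matter how $s$, $P$, and the subsequence are tuned.

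What is missing is precisely the content of the G\'al--Koksma lemma, which you invoke by name but then replace by the failing scheme above: a \emph{maximal} inequality over dyadic blocks, proved by Rademacher--Menshov chaining. Set $S(M,L)=\sup_{I}\bigl|\#\{M< n\le M+L: x_n\in I\}-L\,\mathrm{Leb}(I)\bigr|$; the same Erd\H{o}s--Tur\'an-plus-orthogonality computation gives $\int_\T S(M,L)^2\,d\beta\lesssim L(\log L)^2$ uniformly in $M$ and $\gamma$. For $N\le 2^K$ write $[1,N]$ as a disjoint union of at most $K$ dyadic blocks; subadditivity of $S$ and Cauchy--Schwarz over the blocks give
$$
\max_{N\le 2^K}\,(ND_N)^2 \;\le\; K \sum_{B} S(B)^2,
$$
the sum running over all dyadic blocks $B\subseteq[1,2^K]$, whence $\int_\T \max_{N\le 2^K}(ND_N)^2\,d\beta \lesssim K\sum_{j\le K}2^{K-j}\cdot 2^j j^2 \lesssim 2^K K^4$. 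Now Chebyshev and Borel--Cantelli are applied only to the dyadic scales $K$, with threshold $2^{K/2}K^{5/2}(\log K)^{1/2+\varepsilon}$ and summable tails $\lesssim K^{-1}(\log K)^{-1-2\varepsilon}$, and every $N\in(2^{K-1},2^K]$ is covered automatically because the maximum over the whole block was controlled. This is exactly the bookkeeping $\frac52 = 1+1+\frac12$: one logarithm from the Erd\H{o}s--Tur\'an sum over $h$, one from the chaining, and $\frac12+\varepsilon$ from Borel--Cantelli. With this replacement your outline becomes a complete, and essentially the standard, proof of the cited theorem.
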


Since we will not use this result, we do not prove it. It may be
derived from \cite[Theorem~1.158 on p.~157]{DT}.

\section{Pinned Repetitions in Codings of Interval Exchange Transformations}

In this section we study pinned repetitions occurring in codings
of interval exchange transformations. Interval exchange
transformations are maps from an interval to itself that are
obtained by partitioning the interval and then permuting the
subintervals.

More explicitly, let $m > 1$ be a fixed integer and denote
$$
\Lambda_m = \{ \lambda \in \R^m : \lambda_j > 0 , \; 1 \le j \le m
\}
$$
and, for $\lambda \in \Lambda_m$,
$$
\beta_j(\lambda) = \begin{cases} 0 & j = 0 \\ \sum_{i=1}^j
\lambda_i & 1 \le j \le m \end{cases} , \; I^\lambda_j =
[\beta_{j-1}(\lambda),\beta_j(\lambda)), \; |\lambda| = \sum_{i =
1}^m \lambda_i, \; I^\lambda = [0,|\lambda|).
$$
Denote by $\mathcal{S}_m$ the group of permutations on
$\{1,\ldots,m\}$, and set $\lambda_j^\pi = \lambda_{\pi^{-1}(j)}$
for $\lambda \in \Lambda_m$ and $\pi \in \mathcal{S}_m$. With
these definitions, the $(\lambda,\pi)$-interval exchange map
$T_{\lambda,\pi}$ is given by
$$
T_{\lambda,\pi} : I^\lambda \to I^\lambda , \; x \mapsto x -
\beta_{j-1}(\lambda) + \beta_{\pi(j) -1}(\lambda^\pi) \text{ for }
x \in I_j^\lambda , \; 1 \le j \le m.
$$
A permutation $\pi \in \mathcal{S}_m$ is called irreducible if
$\pi(\{ 1,\ldots ,k\}) = \{ 1,\ldots ,k\}$ implies $k = m$. We
denote the set of irreducible permutations by $\mathcal{S}_m^0$.

Veech proved the following theorem in \cite{v}.

\begin{theorem}[Veech 1984]\label{veechthm}
Let $\pi \in \mathcal{S}_m^0$. For Lebesgue almost every $\lambda
\in \Lambda_m$ and every $\varepsilon > 0$, there are $N \ge 1$
and an interval $J \subseteq I^\lambda$ such that
\begin{itemize}

\item[(i)] $J \cap T^l J = \emptyset$, $1 \le l < N$,

\item[(ii)] $T$ is linear on $T^l J$, $0 \le l < N$,

\item[(iii)] $| \bigcup_{l = 0}^{N-1} T^l J | > (1 - \varepsilon)
|\lambda|$,

\item[(iv)] $| J \cap T^N J | > (1 - \varepsilon) |J|$.

\end{itemize}
\end{theorem}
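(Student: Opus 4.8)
The plan is to read off the interval $J$ and the time $N$ from the Rauzy--Veech renormalization of $T = T_{\lambda,\pi}$. This renormalization represents $T$ as a union of Rokhlin towers over induced interval exchanges, which makes conditions (i) and (ii) automatic for any base of a tower; the two metric conditions (iii) and (iv) will then be arranged by choosing a good induction time, the existence of which I would extract from the ergodicity of the renormalization.

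First I would set up the induction. Given $(\lambda,\pi)$ with $\pi \in \mathcal{S}_m^0$, comparing the length of the last interval with that of the interval sent to the last position and subtracting the shorter from the longer produces the first-return map to a subinterval $I^{(1)} \subsetneq I^\lambda$; this is again an irreducible $m$-interval exchange $(\lambda^{(1)},\pi^{(1)})$. Iterating $k$ times yields $(\lambda^{(k)},\pi^{(k)})$ on a subinterval $I^{(k)}$ together with a nonnegative integer matrix $Q^{(k)}$ (the product of the elementary step matrices) with $\lambda = Q^{(k)}\lambda^{(k)}$. The key object is the associated tower decomposition: up to measure zero, $I^\lambda$ is partitioned into $m$ Rokhlin towers, the $j$-th with base $J^{(k)}_j$ (the $j$-th continuity interval of the induced map, of length $\lambda^{(k)}_j$) and height $h^{(k)}_j = \sum_i Q^{(k)}_{ij}$, and on each floor $T^l J^{(k)}_j$, $0 \le l < h^{(k)}_j$, the map $T$ is a translation. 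Taking $J$ to be (a subinterval of) a single base $J^{(k)}_{j_0}$ and $N = h^{(k)}_{j_0}$ gives (i) and (ii) for free, and makes $T^N J$ the image of $J$ under the induced exchange, i.e.\ a translate $J + \delta_{j_0}$ sitting inside $I^{(k)}$.

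With this dictionary, the remaining conditions become estimates on the renormalization data. Using $\sum_j \lambda^{(k)}_j h^{(k)}_j = |\lambda|$, condition (iii) is the demand that one tower carry almost all the mass, $\lambda^{(k)}_{j_0} h^{(k)}_{j_0} > (1-\varepsilon)|\lambda|$, while (iv) is the demand that the induced translation defect be small relative to the base, $|\delta_{j_0}| < \varepsilon\,\lambda^{(k)}_{j_0}$. Both occur after a long run of induction steps of one combinatorial type acting on the branch $j_0$: such a run builds a single very tall tower while shrinking its base, in exact analogy with a large partial quotient for a circle rotation, where $\limsup a_k = \infty$ forces one Ostrowski tower to fill $\T$ while the defect $\langle q_k \alpha\rangle$ stays small. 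The mechanism guaranteeing such runs for Lebesgue-almost every $\lambda$ is the ergodicity of the Rauzy--Veech (Zorich) renormalization with respect to its absolutely continuous invariant measure, due to Masur and Veech: for a.e.\ $\lambda$ the orbit returns infinitely often to a compact region of parameters from which arbitrarily long same-type runs can be launched.

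The main obstacle is this final step, namely converting the qualitative recurrence into the two \emph{simultaneous} quantitative estimates. One must control the renormalization cocycle well enough that a return of the normalized parameter to the good set, followed by a controlled run of steps, drives $\max_j \lambda^{(k)}_j h^{(k)}_j / |\lambda|$ to $1$ along a subsequence while keeping $\delta_{j_0}$ small compared to $\lambda^{(k)}_{j_0}$. These two demands pull against each other — tower growth versus base shrinkage — and showing that both can be met at a common induction time, for a full-measure set of $\lambda$ and every $\varepsilon$, is the crux; this balance is exactly what the metric analysis of \cite{v} provides.
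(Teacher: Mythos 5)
The first thing to note is that the paper does not prove this statement at all: it is imported verbatim from Veech's paper \cite{v} (``Veech proved the following theorem in \cite{v}''), so there is no internal proof to compare yours against. Judged on its own terms, your proposal correctly identifies the machinery behind Veech's result --- Rauzy--Veech induction, the Rokhlin tower decomposition it produces, the observation that (i) and (ii) are automatic once $J$ is the base of a tower and $N$ its height --- and your translation of (iii) and (iv) into the demands $\lambda^{(k)}_{j_0} h^{(k)}_{j_0} > (1-\varepsilon)|\lambda|$ and $|\delta_{j_0}| < \varepsilon\,\lambda^{(k)}_{j_0}$ is the right dictionary.

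However, the proposal has a genuine gap, and you name it yourself: the entire measure-theoretic content of the theorem --- that for Lebesgue almost every $\lambda$ and every $\varepsilon > 0$ there is a \emph{common} induction time at which one tower dominates in measure while its translation defect is small relative to its base --- is never established; you defer it to ``the metric analysis of \cite{v}.'' Since the statement to be proved \emph{is} the theorem of \cite{v}, this is circular: everything you actually carry out (the tower dictionary, conditions (i) and (ii)) is the routine part, and the part you cite away is the theorem itself. To close the gap one would need at least: (a) existence and ergodicity of the absolutely continuous invariant measure for the renormalization (Masur, Veech); (b) a positive-measure set of renormalized parameters from which a prescribed long run of same-type induction steps can be launched, with quantitative control producing simultaneously the dominant tower and the small defect; and (c) an ergodicity or Borel--Cantelli argument transferring this configuration to infinitely many induction times for almost every $\lambda$, together with the scale-invariance needed to pass from the invariant measure class back to Lebesgue measure. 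Neither (b) nor (c) is sketched beyond the analogy with large partial quotients for circle rotations, and the tension you point out between tower growth and defect smallness is exactly where the work lies. So what you have is an accurate roadmap of Veech's strategy, not a proof.
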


An interval exchange transformation   $T=T_{\lambda,\pi}$
is said to satisfy property  V   if  for every $\varepsilon > 0$,
there are $N \ge 1$  and an interval $J \subseteq I^\lambda$ such that
the  four conditions of Theorem \ref{veechthm}  are satisfied.
This convention is motivated by the following result.

\begin{theorem}\label{v2thm}
Let $T=T_{\lambda,\pi}$  satisfy property V.  Then the coding
 $s$ of the $T$-orbit of
Lebesgue almost every $x \in I^\lambda$ with respect to any finite
partition of $I^\lambda$ obeys $R(s) = T(s) = \infty$.
\end{theorem}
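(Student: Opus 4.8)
The plan is to feed a sequence $\varepsilon_j \downarrow 0$ into property V, obtaining Rokhlin towers whose first return to the base is a tiny translation, and to show that the resulting shadowing forces period-$N_j$ repetitions in the coding whose length vastly exceeds $N_j$. Fix a finite partition $\mathcal{P}$ of $I^\lambda$ with (finite) boundary set $\partial\mathcal{P}$, and let $s = s(x)$ denote the coding of the $T$-orbit of $x$. For each $\varepsilon_j$, property V supplies a height $N_j$ and a base $J_j$ so that the levels $J_j, TJ_j, \ldots, T^{N_j-1}J_j$ are pairwise disjoint by (i); by (ii) the map $T$ is a translation on each level, so $T^{N_j}$ restricted to $J_j$ is a single translation by some $c_j$; and (iii)--(iv) yield $N_j|J_j| > (1-\varepsilon_j)|\lambda|$ together with $|c_j| < \varepsilon_j |J_j|$. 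Thus the tower fills almost all of $I^\lambda$ and the return translation $c_j$ is small relative to the width of the base.

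The core observation is that whenever $z = T^l w$ has both $w$ and $w + c_j$ in $J_j$, one has $T^{N_j} z = z + c_j$, and hence $s_k(x) = s_{k+N_j}(x)$ unless the segment joining $T^k x$ to $T^k x + c_j$ meets $\partial\mathcal{P}$. To quantify how long the period-$N_j$ repetition survives, I would cut $J_j$ into the subintervals (``columns'') on which the full length-$N_j$ ascent reads a fixed $\mathcal{P}$-word; by (ii) the ascent word can change only when some $T^l w$ crosses $\partial\mathcal{P}$. Here is the crucial point: because the levels $T^l J_j$ are pairwise disjoint, each point of $\partial\mathcal{P}$ lies in at most one level, so the number of column boundaries inside $J_j$ is at most $\#\partial\mathcal{P}$, \emph{independently of $N_j$ and $\varepsilon_j$}. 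Taking $i_j \approx \frac{1}{2\,\#\partial\mathcal{P}\cdot \varepsilon_j}$, the set of base points $w$ for which $w, w+c_j, \ldots, w+(i_j-1)c_j$ all lie in a single column then has measure at least $|J_j| - \#\partial\mathcal{P}\,(i_j-1)|c_j| \ge \frac12|J_j|$, using $|c_j| < \varepsilon_j |J_j|$.

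For every such $w$ and every level $l$, the point $x = T^l w$ satisfies $s_k(x) = s_{k+N_j}(x)$ on a block of length at least $(i_j-2)N_j$, so $R_{N_j}(s) \ge i_j - 1$, which is of order $1/\varepsilon_j$. Lifting the good base set through all $N_j$ levels gives a subset of $I^\lambda$ of Lebesgue measure bounded below by a positive constant on which $R(s) \ge i_j - 1$. Since $R(\cdot)$ is almost surely constant with respect to the (ergodic) Lebesgue measure by Proposition~\ref{erg}, this constant must dominate $i_j - 1$ for every $j$, forcing $R(s) = \infty$ almost everywhere. For the centered quantity $T(s)$ one runs the same argument for $T^{-1}$ as well, so that $w + j c_j$ stays in one column for $|j| < i_j$ and the shadowing holds in both directions; by the shift-invariance of $T(\cdot)$ a positive-measure set of points has the origin in the central third of such a two-sided run, which gives $T_{N_j}(s)$ of order $1/\varepsilon_j$ on a positive-measure set, and Proposition~\ref{erg} again upgrades this to $T(s) = \infty$ almost everywhere.

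The step I expect to be most delicate is turning ``the orbit avoids a small set'' into a genuine uninterrupted run of $s_k = s_{k+N_j}$ of length $\gg N_j$. A naive count would fear that the $\varepsilon_j$-fraction of $I^\lambda$ outside the dominant tower, together with the $\mathcal{P}$-boundary crossings, breaks the repetition once every $O(1/\varepsilon_j)$ symbols rather than once every $O(N_j/\varepsilon_j)$ symbols; this would only yield $R_{N_j}(s) = O\!\left(1 + \frac{1}{N_j \varepsilon_j}\right)$ and prove nothing, since $N_j\varepsilon_j$ need not tend to $0$. The disjointness of the tower levels is exactly what rules this out, by capping the number of relevant boundary crossings by $\#\partial\mathcal{P}$; making this rigorous---in particular verifying that passages through the non-dominant columns do not secretly reintroduce failures at frequency $\Theta(\varepsilon_j)$, and that all constants stay independent of $N_j$---is the heart of the argument.
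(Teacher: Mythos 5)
Your tower analysis is correct and in fact supplies the details that the paper's own proof only sketches: from property V you rightly extract that $T^{N_j}$ acts on the base $J_j$ as a translation by $c_j$ with $|c_j|<\varepsilon_j|J_j|$, and your key counting step --- that pairwise disjointness of the levels lets each point of $\partial\mathcal{P}$ create at most one column boundary inside $J_j$, so the number of columns is at most $\#\partial\mathcal{P}+1$ uniformly in $j$ --- is exactly the right mechanism for converting towers into long pinned repetitions of the coding. The gap is in your endgame. Your construction only ever produces a set of measure about $\frac12(1-\varepsilon_j)|\lambda|$ on which $R_{N_j}(s)\ge i_j-1$, and you upgrade ``positive measure'' to ``almost every $x$'' by invoking Proposition~\ref{erg} for ``the (ergodic) Lebesgue measure.'' Ergodicity of Lebesgue measure is neither a hypothesis of the theorem nor a consequence of property V: the definition of property V does not even require $\pi$ to be irreducible, and, for instance, an interval exchange consisting of two disjoint copies of a suitable circle rotation with unbounded partial quotients (take the base interval straddling the junction of the two copies) satisfies property V while Lebesgue measure is manifestly non-ergodic for it. (In the setting of the Corollary, the relevant IETs are indeed uniquely ergodic, but the theorem is stated for arbitrary $T_{\lambda,\pi}$ with property V, and the paper's proof never uses ergodicity.) There is also a smaller slip in the same step: knowing $R_{N_j}(s)\ge i_j-1$ for a single $N_j$ does not bound $R(s)=\limsup_{n\to\infty}R_n(s)$ from below, so even on your good set you cannot assert $R(s)\ge i_j-1$ without first combining infinitely many $j$'s.

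Both problems disappear if you run your own estimate the way the paper does. For a fixed repetition count $m$, do not push $i_j$ to size $1/\varepsilon_j$; take $i_j=2m+2$, say, applied in both time directions. Then the exceptional set --- points outside the tower, or lying over base points within $(i_j-1)|c_j|$ of a column boundary --- has measure at most $\bigl(1+2(\#\partial\mathcal{P}+1)(2m+2)\bigr)\varepsilon_j|\lambda|$, which tends to $0$ as $\varepsilon_j\to0$ with $m$ fixed. Passing to a subsequence along which this bound is at most $2^{-m}$ and applying Borel--Cantelli shows that almost every $x$ has $m$ pinned repetitions in both directions for all but finitely many $m$, whence $R(s)=T(s)=\infty$ almost everywhere, with no ergodicity needed. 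This is precisely the subsequence-plus-Borel--Cantelli argument of the paper, for which your column-counting estimate provides the missing quantitative substance; you should also dispose, as the paper does, of the degenerate case $\limsup_{\varepsilon\to0^+}N(\varepsilon)<\infty$, where $T$ is essentially a rational rotation and the claim is trivial.
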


\begin{proof}
Fix any finite partition $I^\lambda = J_1\sqcup \cdots \sqcup J_N$.

If $\limsup_{\varepsilon \to 0+} N(\varepsilon) < \infty$, it is
not hard to see that $T_{\lambda,\pi}$ is a ``rational rotation''
and the assertion of the theorem holds trivially.

Consider the other case and let $\varepsilon_n \to 0$ be such that
$N(\varepsilon_n) \to \infty$. By passing to a suitable
subsequence $\{\varepsilon_{n_m}\}$, we can ensure that the
Lebesgue measure of those points $x \in I^\lambda$ that do not
have $m$ repetitions in both directions is bounded by $2^{-m}$.
Thus, by Borel-Cantelli, almost every point has unbounded
repetitions in both directions, which implies the assertion.
\end{proof}

\begin{coro}
Let $\pi \in \mathcal{S}_m^0$. For Lebesgue almost every $\lambda
\in \Lambda_m$, the coding $s$ of the $T_{\lambda,\pi}$-orbit of
Lebesgue almost every $x \in I^\lambda$ with respect to any finite
partition of $I^\lambda$ obeys $R(s) = T(s) = \infty$.
\end{coro}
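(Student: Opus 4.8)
The plan is to combine Veech's Theorem (Theorem~\ref{veechthm}) with the conditional result just proved (Theorem~\ref{v2thm}). Veech's theorem asserts that for \emph{Lebesgue almost every} $\lambda \in \Lambda_m$, and for every $\varepsilon > 0$, there exist $N \ge 1$ and an interval $J \subseteq I^\lambda$ satisfying conditions (i)--(iv). But this is precisely the statement that $T_{\lambda,\pi}$ satisfies property V. Hence Veech's theorem can be rephrased as: for Lebesgue almost every $\lambda \in \Lambda_m$, the interval exchange transformation $T_{\lambda,\pi}$ satisfies property V.

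With this observation, the corollary follows immediately by composing two ``almost every'' statements. First I would fix $\pi \in \mathcal{S}_m^0$ and let $\Lambda_m^{\mathrm{good}} \subseteq \Lambda_m$ denote the full-measure set of $\lambda$ for which $T_{\lambda,\pi}$ satisfies property V, as furnished by Theorem~\ref{veechthm}. Then for each such $\lambda$, Theorem~\ref{v2thm} applies directly: since $T_{\lambda,\pi}$ satisfies property V, the coding $s$ of the $T_{\lambda,\pi}$-orbit of Lebesgue almost every $x \in I^\lambda$ (with respect to any finite partition) obeys $R(s) = T(s) = \infty$. Stringing these together yields exactly the claimed conclusion.

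There is essentially no main obstacle here, as this is a direct corollary by design: the abstraction of ``property V'' in the definition preceding Theorem~\ref{v2thm} was introduced precisely to decouple the dynamical input (Veech's measure-theoretic genericity statement) from the symbolic consequence (unbounded pinned repetitions). The only point requiring a moment's care is the bookkeeping of the two separate almost-everywhere quantifiers, one over $\lambda \in \Lambda_m$ and one over $x \in I^\lambda$; these live in different spaces, so no Fubini-type argument or joint measurability concern arises, and one simply quotes the two theorems in sequence.
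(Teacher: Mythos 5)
Your proposal is correct and is exactly the argument the paper intends: the notion of ``property V'' was abstracted from Theorem~\ref{veechthm} precisely so that the corollary follows by quoting Theorem~\ref{veechthm} (for Lebesgue almost every $\lambda$, $T_{\lambda,\pi}$ satisfies property V, with the full-measure set independent of $\varepsilon$) and then Theorem~\ref{v2thm} for each such $\lambda$, which is why the paper states it without proof. Your remark that the two almost-everywhere quantifiers live in different spaces and require no Fubini argument is also accurate.
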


\section{Pinned Repetitions in Codings of Quadratic Polynomials}

In this section we study sequences $s$ of the following type.
Suppose $\{ J_l : 1 \le l \le N \}$ is a partition of $\T$ into
finitely many intervals, $(\alpha,\beta,\gamma) \in \T^3$, and $s$
denotes the coding of $\alpha n^2 + \beta n + \gamma$ with respect
to the partition $J$. For example, assigning distinct numbers
$\lambda_l$ to the partition intervals, we may write
$$
s_n = \sum_{l = 1}^N \lambda_l \chi_{J_l}(\alpha n^2 + \beta n +
\gamma).
$$
Sometimes we make the dependence of $s$ on the parameters explicit
and write $s(\alpha,\beta,\gamma)$ or $s(\alpha,\beta,\gamma,J)$.
As explained in \cite{f}, there is close connection between
codings of quadratic polynomials and codings of orbits of the
skew-shift on $\T^2$.

We are interested in identifying the numbers $R(s)$ and $T(s)$ for
such sequences $s$. We present a number of results regarding this
problem. Roughly speaking, these numbers may take on the extreme
values $1$ and $\infty$ and they grow with the quality with which
$\alpha$ can be approximated by rational numbers.

\subsection{Absence of Repetitions} We first consider the case
where $\alpha$ is not well approximated by rational numbers and
show that there indeed are no repetitions in the sense that $R(s)
= T(s) = 1$. To make the argument more transparent, we begin by
considering $\alpha$'s with bounded partial quotients. This means
that the coefficients $\{a_n\}$ in the continued fraction
expansion are bounded; equivalently, $\inf_{q \in \Z_+} q \langle
q \alpha \rangle > 0$. The set of such $\alpha$'s has zero
Lebesgue measure.

\begin{theorem}\label{propbadlyapprox}
Suppose that $\alpha \in \T$ has bounded partial quotients and
each partition interval $J_l$ has length strictly less than $1/2$.
Then, $R (s(\alpha,\beta,\gamma,J)) = T(s(\alpha,\beta,\gamma,J))
= 1$ for every $(\beta , \gamma) \in \T^2$.
\end{theorem}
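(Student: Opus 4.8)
The plan is to prove the equivalent statement $R(s)=1$; since one always has $1\le T(s)\le R(s)$, this forces $T(s)=1$ as well. Write $x_n=\alpha n^2+\beta n+\gamma$ and let $m_n=\sup\{m:s_k=s_{k+n}\text{ for }1\le k\le m\}$ be the one–sided repetition length, so the goal is $\limsup_n m_n/n=0$. The whole argument rests on the elementary identity
$x_{k+n}-x_k=2\alpha n\,k+c_n$ with $c_n=\alpha n^2+\beta n$: the displacement between the two copies is itself an arithmetic progression in $k$ with common difference $2\alpha n$.

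First I would record two necessary conditions for $s_k=s_{k+n}$. Since each cell has length at most $L:=\max_l|J_l|<\tfrac12$, two points lying in a common cell are at circle distance $<L$, so $\langle x_{k+n}-x_k\rangle<L$; equivalently the progression $\{2\alpha n\,k+c_n:1\le k\le m_n\}$ avoids the arc of length $u=1-2L>0$ on which $\langle\cdot\rangle>L$. This is exactly the hypothesis of Lemma~\ref{fareylemma} (with $x=2\alpha n$, $y=c_n$), which produces a bounded integer $q_0<2/u$ with $\langle 2q_0 n\alpha\rangle<1/m_n$; thus the displacement is nearly periodic with short period $q_0$, and inside each residue class modulo $q_0$ the quantities $\langle x_{k+n}-x_k\rangle$ barely move. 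The bounded–partial–quotient hypothesis enters through the matching lower bound $\langle 2n\alpha\rangle\ge c/(2n)$, so the per–step drift of the displacement is of exact order $1/n$.

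The second, decisive, condition is that no partition endpoint may lie strictly between $x_k$ and $x_{k+n}$. I would count the total number of such separations, $C=\sum_{k=1}^{m_n}\#\{\text{endpoints in the short arc from }x_k\text{ to }x_{k+n}\}$, and show $C>0$ once $m_n$ is too large, contradicting $C=0$. Writing the endpoint–counting function as $Nt$ minus a bounded periodic sawtooth $\rho$, one gets $C=N\Sigma+E$, where $\Sigma=\sum_{k=1}^{m_n}\langle x_{k+n}-x_k\rangle$ is the total displacement and $E$ is an equidistribution error of the quadratic sequence tested against $\rho$. Two estimates then close the argument. First, confinement to the arc together with $\langle 2n\alpha\rangle\ge c/(2n)$ forces $\Sigma\gtrsim m_n^2/n$: in the generic small–drift (monotone) regime the displacements run through an arithmetic progression of step $\gtrsim1/n$ through the arc, so $\sum|\cdot|\gtrsim\eta m_n^2\gtrsim m_n^2/n$, while the exceptional large–drift regime is pinned down by the period–$q_0$ structure above and gives the even stronger bound $\Sigma\gtrsim m_n$. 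Second, by Weyl equidistribution of $x_n$ (one may invoke Theorem~\ref{qpdiscest} for a rate) one has $|E|\lesssim m_n D$ with discrepancy $D=D(n)\to0$. Hence $C=0$ forces $N\Sigma\le|E|$, i.e. $m_n\lesssim (n/c)\,D(n)=o(n)$, which is the claim.

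I expect the estimate of $E$, not the soft structure, to be the main obstacle. The delicate point is that the short arc from $x_k$ to $x_{k+n}$ has a \emph{$k$–dependent} length and orientation, so the sawtooth error carries a sign $\mathrm{sign}(x_k-x_{k+n})$ that varies with $k$; to secure $|E|=o(\Sigma)$ I would split $1\le k\le m_n$ according to residues modulo $q_0$ (on each of which the orientation is constant and the displacement nearly constant) and apply equidistribution of the quadratic subsequences $x_{q_0 t+r}$ separately, the common means cancelling between the shifted and unshifted blocks. Making this decomposition beat the main term $N\Sigma$ uniformly — in particular handling the non–monotone regime where the progression ping–pongs within the arc — is precisely where Lemma~\ref{fareylemma} and the bounded–partial–quotient hypothesis are indispensable.
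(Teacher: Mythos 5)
Your proposal follows the paper's own skeleton very closely up to the last step: the same reduction to proving $R(s)=1$, the same displacement identity $x_{k+n}-x_k=2\alpha n k+c_n$, the same application of Lemma~\ref{fareylemma} with $x=2\alpha n$ to produce a bounded $q_0$ with $\langle 2q_0n\alpha\rangle<1/m_n$, and the same use of bounded partial quotients for the matching lower bound on the drift. You diverge only in the endgame, where the paper forces a \emph{single} crossing of a partition endpoint while you count \emph{all} crossings and try to beat an equidistribution error $E$ with the main term $N\Sigma$. The difficulty you flag there is real, but the resolution you sketch fails as stated: within a residue class mod $q_0$ the displacement is \emph{not} nearly constant. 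By the bounded-partial-quotient bound its per-step drift is at least $c/(2q_0n)$, so in the regime you must rule out ($m_n\ge\nu n$) it moves by a definite constant $\gtrsim c\nu/q_0^2$ across the $\sim m_n/q_0$ indices of a class; indeed this drift is exactly what produces your lower bound $\Sigma\gtrsim m_n^2/n$, so near-constancy of the displacement and your main-term estimate cannot hold simultaneously. The orientation is not constant on a class either: it flips when the (monotone) displacement crosses $0$. Consequently you cannot bound $E$ by applying a fixed-interval discrepancy estimate class by class, and the inequality $|E|\lesssim m_n D(n)$ is unjustified as written.

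The gap is repairable, but it needs one more idea. Within each residue class, and on each of the at most two maximal sub-blocks of constant sign, the arc length $\langle x_{k+n}-x_k\rangle$ is monotone; so cut the class into consecutive blocks on which it varies by at most $1/K$, replace the varying arcs by a fixed (nested) arc per block at a cost $O(m_n/K)$, and apply a discrepancy bound for the quadratic subsequence on each block, \emph{uniformly in the block's position} (this uniformity, available for quadratics by Weyl--van der Corput differencing since the differenced polynomial's linear coefficient is shift-independent, is tacitly needed in the paper's argument too). Letting $K\to\infty$ slowly gives $|E|=o(m_n)$, which suffices since $\Sigma\gtrsim\nu^2 n$ whenever $m_n\ge\nu n$. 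The paper's endgame avoids this bookkeeping: monotonicity shows the indices with $|d_{q_0k}|\le\eta$ form a single consecutive block, so some consecutive block of length $\gtrsim m_n$ carries a displacement of fixed sign and magnitude $\ge\eta$, and equidistribution need only place one base point within $\eta$ of one partition endpoint, on the side from which the displacement pushes across it. That argument is softer and shorter; your counting argument, once repaired as above, buys something the paper's does not, namely a quantitative rate ($R_n(s)-1\lesssim$ a discrepancy bound for all large $n$, not merely $R(s)=1$), which is the natural route to the Roth-number refinement of Theorem~\ref{proprothnumber}.
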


\begin{proof}
Assume that $R(s) > 1 + \nu$ for some $\nu \in (0,1)$. Then, $s$
has infinitely many $(1 + \nu)$-repetitions starting at the
origin. Let $n$ be the length of such a prefix of $s |_{\Z_+}$
that is $(1 + \nu)$-repeated.

For $1 \le k \le \nu n$, write
\begin{align*}
y_k & = \alpha k^2 + \beta k + \gamma \\
z_k & = \alpha (n+k)^2 + \beta (n+k) + \gamma \\
d_k & = z_k - y_k = \alpha n^2 + \beta n + 2 \alpha n k.
\end{align*}
Choose  $0 < l < 1/2$ such that each interval of the partition
under consideration has length less than $l$. Then, for $1 \le k
\le \nu n$, $y_k$ and $z_k$ must fall in the same interval of the
partition. In particular, $\langle d_k \rangle$ is bounded above
by $l$ for each such $k$. Consequently, $d_k$ avoids an arc $J
\subseteq \T$ of length $u = 1 - 2l > 0$.

Applying Lemma~\ref{fareylemma} with $x = 2 \alpha n$ and $y =
\alpha n^2 + \beta n$, we find that there is a positive integer $q
< \frac{2}{u}$ such that $\langle q x \rangle < \frac{1}{n}$. On
the other hand, there is $c = c(u,\alpha) > 0$ such that $ \langle
q x \rangle = \langle 2 \alpha q n \rangle > \frac{c}{n}$ since
$\alpha$ has bounded partial quotients. Combining the two
estimates, we have that
\begin{equation}\label{sxuplo}
\frac{c}{n} < \langle q x \rangle < \frac{1}{n}.
\end{equation}

Now assume in addition that $n > \frac{2}{\nu u}$ and let $m =
\lfloor \frac{\nu n}{q} \rfloor$, so that $1 \le \frac{\nu n}{q}
\le m \le \nu n$. Consider the points $\{ d_{qk} : 1 \le k \le m
\}$. It follows from \eqref{sxuplo} that the diameter of this set
belongs to the interval $(\frac{c \nu}{q}, \frac{\nu}{q})$.

For $n$ sufficiently large, we obtain a contradiction because, as
we saw above, the points $y_{sk}$ are well distributed on $\T$ and
will come close to the partition points. Addition of the
difference $d_{qk}$ for $k$ with $1 \le k \le m$ sufficiently
large will then go ``across'' such a partition point, which
contradicts the assumption the $y_{qk}$ and $z_{qk}$ belong to the
same interval of the partition. It follows that $R(s) = 1$, which
also yields $T(s) = 1$.
\end{proof}

As pointed out above, this result covers only a set of $\alpha$'s
that has zero Lebesgue measure. Let us extend it to a larger set.
Recall that $\alpha \in \T$ is a Roth number if for every
$\varepsilon > 0$, there is a constant $c(\varepsilon)$ such that
$
\langle q\alpha \rangle > \frac{c(\varepsilon)}{q^{1 +
\varepsilon}},
$
for every $q \in \Z_+$.

 If we replace the qualitative
well-distribution property with the quantitative discrepancy bound
established above, virtually the same argument proves the
following result, which covers a set of $\alpha$'s that has full
Lebesgue measure.

\begin{theorem}\label{proprothnumber}
Suppose that $\alpha \in \T$ is a Roth number and each partition
interval $J_l$ has length strictly less than $1/2$. Then, $R
(s(\alpha,\beta,\gamma,J)) = T(s(\alpha,\beta,\gamma,J)) = 1$ for
every $(\beta , \gamma) \in \T^2$.
\end{theorem}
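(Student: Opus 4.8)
The plan is to retrace the proof of Theorem~\ref{propbadlyapprox} line by line, changing only the two places where the bounded partial quotient hypothesis entered, and to feed in the quantitative discrepancy bound of Theorem~\ref{qpdiscest} in place of the purely qualitative equidistribution used there. So I would again assume $R(s) > 1+\nu$ for some $\nu \in (0,1)$, pick a repeated prefix of length $n$, introduce $y_k$, $z_k$ and $d_k = \alpha n^2 + \beta n + 2\alpha nk$ exactly as before, note that $\langle d_k\rangle \le l$ for $1 \le k \le \nu n$ (with $l < 1/2$ an upper bound for the interval lengths and $u = 1 - 2l$), and apply Lemma~\ref{fareylemma} with $x = 2\alpha n$ to produce a positive integer $q < 2/u$ with $\langle qx\rangle < 1/n$. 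Since $q$ ranges over the finite set $\{1, \ldots, \lceil 2/u\rceil\}$, I may pass to the infinitely many repetition lengths $n$ sharing one value of $q$, which I fix from now on.

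The first substitution is the lower bound on $\langle qx\rangle$. Where Theorem~\ref{propbadlyapprox} used $\langle 2\alpha qn\rangle > c/n$, I would instead invoke the Roth property at the integer $2qn$, giving $\langle qx\rangle = \langle (2qn)\alpha\rangle > c(\varepsilon)(2qn)^{-1-\varepsilon}$. With $m = \lfloor \nu n/q\rfloor$, the set $\{d_{qk} : 1 \le k \le m\}$ is an arithmetic progression on $\T$ of step $\langle qx\rangle$ that does not wrap around (its total length is $< m/n < 1$), so its diameter $\Delta$ now satisfies $\Delta \gtrsim m\,\langle qx\rangle \gtrsim_{q,\nu,\varepsilon} n^{-\varepsilon}$. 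The essential change from the bounded partial quotient case is that this ``target window'' is no longer of fixed size but shrinks polynomially, like $n^{-\varepsilon}$; hence qualitative equidistribution of the $y_{qk}$ no longer suffices and a quantitative rate is required.

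The second, and main, substitution supplies that rate. The points $y_{qk} = (\alpha q^2)k^2 + (\beta q)k + \gamma$ form a quadratic sequence whose leading coefficient $\alpha q^2$ inherits the Roth property from $\alpha$, since $\langle k\,\alpha q^2\rangle = \langle (q^2 k)\alpha\rangle \gtrsim_{q,\varepsilon} k^{-1-\varepsilon}$. Choosing a convergent denominator $q'_j$ of $\alpha q^2$ with $q'_j \le m/4 < q'_{j+1}$, the two-sided convergent estimate together with the Roth bound forces $q'_{j+1} \lesssim (q'_j)^{1+\varepsilon}$ and hence $q'_j \gtrsim m^{1-\varepsilon}$; writing $\alpha q^2 = p'/q'_j + \theta/(q'_j)^2$ with $|\theta| \le 1$ (valid because $|\alpha q^2 - p'/q'_j| < (q'_j q'_{j+1})^{-1} \le (q'_j)^{-2}$), Theorem~\ref{qpdiscest} yields a discrepancy over any window of $q'_j$ consecutive indices bounded by $C_\varepsilon (q'_j)^{-1/3-\varepsilon} \lesssim m^{-1/3 + o(1)}$. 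The theorem holds for every $\beta,\gamma$, so the bound is available on non-initial windows once the index shift is absorbed into new values of $\beta,\gamma$.

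The contradiction is then assembled as before. Since $d_{qk}$ sweeps monotonically across an arc of length $\Delta$ while $\langle d_{qk}\rangle \le l$, a fixed positive proportion of the indices — an interval $K$ with $|K| \ge m/4 \ge q'_j$ — have $d_{qk}$ of one fixed sign and magnitude at least $\Delta/4$. Fixing a partition point $\rho$ and applying the discrepancy estimate on a length-$q'_j$ window inside $K$, the bound $m^{-1/3+o(1)} \ll \Delta/4 \sim n^{-\varepsilon}$ guarantees, for $n$ large, some such $k$ with $y_{qk}$ in the arc of length $\Delta/4$ immediately preceding $\rho$; then $z_{qk} = y_{qk} + d_{qk}$ lies past $\rho$, so $y_{qk}$ and $z_{qk}$ fall in different partition intervals, contradicting the repetition. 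Thus $R(s) = 1$, and $T(s) = 1$ follows from $T(s) \le R(s)$. I expect the only real obstacle to be the quantitative coordination in this last step: matching the discrepancy estimate, which is available only at the special scale $N = q'_j$, to the shrinking window $\Delta$, while simultaneously arranging that the index realizing the near-hit of $\rho$ also carries a value $d_{qk}$ large enough to force the crossing. The decisive point that makes this possible is that the discrepancy exponent $1/3$ strictly exceeds $\varepsilon$, so the equidistribution rate $n^{-1/3+o(1)}$ outpaces the window size $n^{-\varepsilon}$ for all small $\varepsilon$.
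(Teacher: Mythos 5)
Your proof is correct, and its overall strategy --- rerun the proof of Theorem~\ref{propbadlyapprox}, replacing the bounded-partial-quotient lower bound on $\langle qx \rangle$ by the Roth bound and the qualitative equidistribution by the quantitative discrepancy estimate of Theorem~\ref{qpdiscest} --- is exactly the route the paper takes. The one substantive difference is \emph{where} the discrepancy theorem is applied. The paper's two-sentence proof invokes Theorem~\ref{qpdiscest} for the full sequence $x_1, \ldots, x_n$, sandwiching $n$ between convergent denominators $q_k \le n < q_{k+1}$ of $\alpha$ itself, converting this via the Roth condition into $n^{-1/3+o(1)}$-density, and then asserting the contradiction ``as before.'' You instead apply it to the reindexed subsequence $y_{qk} = (\alpha q^2)k^2 + (\beta q)k + \gamma$, over a window of length equal to a convergent denominator $q_j'$ of $\alpha q^2$, placed inside the index interval $K$ on which $d_{qk}$ has fixed sign and magnitude comparable to the sweep $\Delta$. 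Yours is the more careful version: the contradiction requires a near-hit of a partition point at an index of the form $qk$ with $k \in K$, since only for those indices is the shift $d$ bounded below; density of the full set $\{x_j : j \le n\}$, which is what the paper literally estimates, does not by itself produce such an index. Your supporting observations --- that $\alpha q^2$ inherits the Roth property, that an index shift of the window is absorbed into new values of $\beta, \gamma$ (using that $C_\varepsilon$ in Theorem~\ref{qpdiscest} is uniform in $\beta, \gamma$), and that the Roth condition forces $q_j' \gtrsim m^{1-\varepsilon}$ so the discrepancy scale $m^{-1/3+o(1)}$ beats the shrinking window $n^{-\varepsilon}$ --- are precisely what make this refinement run. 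So your write-up not only matches the paper's intended argument but supplies details (arguably, repairs an imprecision) that the paper's sketch omits.
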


\begin{proof}
The only change that needs to be made to the argument above is the
following. Assuming $\alpha$ to be Roth, instead of
\eqref{sxuplo}, we can prove
\begin{equation}\label{sxuplo2}
\frac{c_\varepsilon}{n^{1+\varepsilon}} < \langle q x \rangle <
\frac{1}{n}
\end{equation}
for any $\varepsilon > 0$. Applying Theorem~\ref{qpdiscest}, we
see that for every $\tilde \varepsilon > 0$, the points $x_1 , x_2
, \ldots , x_n$ are $C_{\tilde \varepsilon} q_k^{- \frac{1}{3} -
\tilde \varepsilon}\!$-dense in $\T$, where $k$ is chosen such
that $q_k \le n < q_{k+1}$. Since the Roth condition also implies
that the $q_k$'s associated with $\alpha$ obey $q_{k+1} \lesssim
q_k^{1+\varepsilon}$, it follows that $x_1 , x_2 , \ldots , x_n$
is $C_{\tilde \varepsilon} n^{- (1+\varepsilon)(\frac{1}{3} +
\tilde \varepsilon)}$-dense in $\T$.

Now we can conclude the proof as before by considering the points
$\{ d_{qk} : 1 \le k \le m \}$. The addition of one of them to the
corresponding $y_{qk}$ will take the point across a partition
point by the estimates just obtained.
\end{proof}

\subsection{Infinite Repetitions} Let us now turn to the other
extreme and start off by studying the case of rational $\alpha$.
We will see that in this case, there are infinite repetitions for
almost every pair $(\beta,\gamma)$. The next step will then be to
identify situations in which we have infinite repetitions for
irrational $\alpha$'s that are well approximated by rational
numbers in a suitable sense.

Denote  by  $\T_w$  the subset of irrational numbers  in  $\T$
with unbounded partial quotients.  It is well known that  $\T_w$
is a set of full Lebesgue measure in  $\T$.

First we address the case  $\alpha=0$.

%%%%%%%%%%%%%%%%%%%%%%%%%%%%%%%%%%%%%%
\begin{lemma}\label{lem0}
Suppose that  $\beta\in\T_w$.  Then,  for Lebesgue almost every
$\gamma\in \T$,
$$R(s(0,\beta,\gamma,J)) = T(s(0,\beta,\gamma,J)) = \infty.$$
\end{lemma}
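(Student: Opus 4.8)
Since $T(\omega)\le R(\omega)$ always holds, it suffices to prove that $T(s)=\infty$ for Lebesgue almost every $\gamma$. The plan is to exploit the special feature of the case $\alpha=0$: here $x_j=\beta j+\gamma$, so the displacement $x_{j+n}-x_j=\beta n$ is \emph{independent of $j$}. This is precisely what makes $\alpha=0$ the clean base case, since (unlike in the proof of Theorem~\ref{propbadlyapprox}, where the analogous difference $d_k$ depends on $k$) the obstruction to a period-$n$ repetition will be a single, $j$-independent ``bad set,'' and I can then run a soft Borel--Cantelli argument in the spirit of Theorem~\ref{v2thm}, with \emph{no} equidistribution or discrepancy input.

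First I would reformulate the pinned repetition as an orbit-avoidance condition. Fix a period $n$, put $\delta=\langle n\beta\rangle$, and let $\epsilon$ be the signed distance from $n\beta$ to its nearest integer, so $|\epsilon|=\delta$. Then $s_j=s_{j+n}$ holds iff no partition endpoint lies on the short arc joining $\beta j+\gamma$ to $\beta j+\gamma+\epsilon$. Unwinding the definition of $T_n$, the two requirements $\omega_k=\omega_{k+n}$ and $\omega_{n+1-k}=\omega_{1-k}$ for $1\le k\le m$ together amount to demanding $s_j=s_{j+n}$ for all $j$ with $1-m\le j\le m$. Hence $T_n(s)\ge 1+\tfrac{m}{n}$ exactly when the orbit segment $\{\beta j+\gamma:1-m\le j\le m\}$ avoids the bad set $\mathcal B_n=\bigcup_p I_p$, where the union is over the (at most $N$) partition endpoints $p$ and $I_p$ is the length-$\delta$ arc abutting $p$ on the side selected by the sign of $\epsilon$; in particular $\mathrm{Leb}(\mathcal B_n)\le N\delta$.

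Next I would choose the scales using $\beta\in\T_w$. Because the partial quotients of $\beta$ are unbounded, I can pick $k_1<k_2<\cdots$ with $a_{k_\ell+1}\ge 4^\ell$ and set $n_\ell=q_{k_\ell}$, so that $\delta_\ell:=\langle n_\ell\beta\rangle<1/q_{k_\ell+1}\le 1/(a_{k_\ell+1}n_\ell)\le 4^{-\ell}/n_\ell$. Taking half-length $M_\ell=2^\ell n_\ell$, the paragraph above shows that $B_\ell:=\{\gamma:T_{n_\ell}(s)<1+2^\ell\}\subseteq\bigcup_{1-M_\ell\le j\le M_\ell}(\mathcal B_{n_\ell}-\beta j)$. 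By translation invariance of Lebesgue measure and a union bound,
\[
\mathrm{Leb}(B_\ell)\le 2M_\ell\,\mathrm{Leb}(\mathcal B_{n_\ell})\le 2M_\ell N\delta_\ell\le 2\cdot 2^\ell n_\ell\cdot N\cdot 4^{-\ell}/n_\ell=2N\,2^{-\ell}.
\]
These are summable, so by Borel--Cantelli almost every $\gamma$ lies in only finitely many $B_\ell$; for each such $\gamma$ one has $T_{n_\ell}(s)\ge 1+2^\ell$ for all large $\ell$, and since $n_\ell\to\infty$ this yields $T(s)=\limsup_n T_n(s)=\infty$, whence $R(s)=\infty$ as well.

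The only genuinely delicate step is the bookkeeping in the reformulation: one must verify that the two-sided pinned condition collapses to avoidance of a \emph{single}, $j$-independent bad set $\mathcal B_n$ (this is exactly where $\alpha=0$ is used), and correctly track which side of each endpoint is forbidden according to the sign of $\epsilon$. Once that is in place, everything reduces to the elementary measure estimate above, so I do not expect any serious analytic obstacle here; the argument is the ``small divisor'' counterpart of the absence-of-repetition results, and structurally it mirrors the Borel--Cantelli scheme already used for interval exchange transformations.
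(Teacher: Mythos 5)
Your proof is correct, but it is a genuinely different argument from the one in the paper. The paper disposes of this lemma in one line: for $\alpha=0$ the sequence is a coding of the rotation $n\mapsto \beta n+\gamma$, which is the $2$-interval exchange $T_{\lambda,\pi}$ with $\lambda=(1-\beta,\beta)$, $\pi=(21)$; when $\beta\in\T_w$ this transformation satisfies property V, so Theorem~\ref{v2thm} applies and gives $R=T=\infty$ for almost every starting point $\gamma$. Your argument bypasses the interval-exchange machinery entirely: you reformulate the two-sided pinned condition $T_n(s)\ge 1+m/n$ as avoidance by the orbit segment $\{\beta j+\gamma: 1-m\le j\le m\}$ of a single $j$-independent bad set (a union of at most $N$ arcs of length $\langle n\beta\rangle$ abutting the partition endpoints — correctly exploiting that the displacement $\beta n$ does not depend on $j$ when $\alpha=0$), then choose scales $n_\ell=q_{k_\ell}$ with $a_{k_\ell+1}\ge 4^\ell$ so that $\langle n_\ell\beta\rangle\le 4^{-\ell}/n_\ell$, and close with a union bound plus Borel--Cantelli in $\gamma$. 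The bookkeeping checks out: the two conditions in the definition of $T_n$ do collapse to $s_j=s_{j+n}$ for $1-m\le j\le m$, the measure estimate $2M_\ell N\delta_\ell\le 2N\,2^{-\ell}$ is summable, and $T_{n_\ell}(s)\ge 1+2^\ell$ along the subsequence forces $T(s)=\infty$. What the paper's route buys is brevity and uniformity — it reuses Theorem~\ref{v2thm}, which handles all interval exchanges satisfying property V at once; what your route buys is a self-contained, fully quantitative proof (explicit repetition rates along the convergent denominators) that does not depend on Veech's theorem or on the rather sketchy proof of Theorem~\ref{v2thm}. In fact, your argument is essentially the ``alternative, more direct argument'' the paper attributes to the reference [dp] (Delyon--Petritis) without spelling it out.
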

\begin{proof}
In this case  $s=s(0,\beta,\gamma,J)$  is a coding of $\beta
n+\gamma$ with respect to the fixed partition  $J$.  The claim of
the lemma follows from the fact that the 2-interval exchange
transformation $T=T_{\lambda,\pi}$  with  $\lambda=(1-\beta,
\beta),\ \pi=(21)$ is Veech if  $\beta\in\T_w$ (see
Theorems~\ref{veechthm} and \ref{v2thm} above).

For an alternative, more direct argument, consult \cite{dp}.
\end{proof}

\begin{theorem}\label{ratreplem}
Suppose that $\alpha \in \T$ is rational.  Then, for every  $\beta\in\T_w$,
\begin{equation}\label{eq:rat}
R(s(\alpha,\beta,\gamma,J)) = T(s(\alpha,\beta,\gamma,J)) = \infty
\end{equation}
for Lebesgue almost every $\gamma \in \T$. In particular,
\eqref{eq:rat} holds for Lebesgue almost every
$(\beta,\gamma)\in\T^2$.
\end{theorem}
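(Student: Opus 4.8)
The plan is to reduce to the rotation case of Lemma~\ref{lem0} via a block encoding that exploits the fact that $\alpha n^2$ is periodic in $n$ once $\alpha$ is rational. Write $\alpha = a/b$ in lowest terms; the case $b = 1$ (i.e.\ $\alpha = 0$ in $\T$) is already Lemma~\ref{lem0}, so assume $b \ge 2$. First I would record how $x_n = \alpha n^2 + \beta n + \gamma$ behaves along a fixed residue class modulo $b$. Substituting $n = bk + r$ and using that $ab k^2 + 2akr \in \Z$, one finds
$$
x_{bk+r} \equiv b\beta\, k + d_r + \gamma \pmod 1, \qquad d_r = \beta r + \tfrac{a}{b} r^2,
$$
with $d_0 = 0$. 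Thus, writing $w_k = b\beta\, k + \gamma$, each length-$b$ block of the orbit is the single point $w_k$ shifted by the fixed offsets $d_0, \dots, d_{b-1}$, independently of $k$ and $\gamma$.

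Next I would package this as a genuine rotation coding over an enlarged alphabet. Let $S_k = (s_{bk}, s_{bk+1}, \dots, s_{bk+b-1})$. By the display above, $S_k$ depends only on $w_k$: it records the atom containing $w_k$ of the partition $\tilde J$ obtained as the common refinement of the translated partitions $J - d_0, \dots, J - d_{b-1}$, which is again a finite partition of $\T$ into intervals. Since $w_k = b\beta\, k + \gamma$ is exactly the orbit of $\gamma$ under rotation by $b\beta$, the sequence $S = (S_k)_{k}$ is precisely the coding $s(0, b\beta, \gamma, \tilde J)$.

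To invoke Lemma~\ref{lem0} I must check that $b\beta \in \T_w$. Using the standard characterization $\beta \in \T_w \iff \liminf_q q\langle q\beta\rangle = 0$ (which follows from the convergent estimates recalled above) together with $\langle bx\rangle \le b\langle x\rangle$, one gets $q\langle q(b\beta)\rangle \le b\, q\langle q\beta\rangle$ along the convergent denominators of $\beta$, so the $\liminf$ for $b\beta$ also vanishes; and $b\beta$ is irrational. Hence $b\beta \in \T_w$, and Lemma~\ref{lem0} (applied with $b\beta$ in place of $\beta$ and $\tilde J$ in place of $J$) yields $R(S) = T(S) = \infty$ for Lebesgue almost every $\gamma$.

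Finally I would transfer the repetitions from $S$ to $s$. It suffices to treat $T$, since $T(s) \le R(s)$ forces $R(s) = \infty$ once $T(s) = \infty$. A value $m$ attaining the supremum defining $T_Q(S)$ produces a centered period-$bQ$ repetition of $s$ of half-length $bm - O(b)$, so $T_{bQ}(s) - 1 \ge (m - O(1))/Q$; running this along the sequence of $Q$'s with $m/Q \to \infty$ furnished by $T(S) = \infty$ gives $T(s) = \infty$, and hence $R(s) = \infty$, for a.e.\ $\gamma$. The concluding ``in particular'' statement then follows from Fubini, since the conclusion holds for every $\beta$ in the full-measure set $\T_w$ and, for each such $\beta$, for a.e.\ $\gamma$. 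The step I expect to be the main obstacle is precisely this transfer: one must verify that $S$ really is the rotation coding $s(0,b\beta,\gamma,\tilde J)$ (not merely an abstract symbolic factor) and that the normalizations $1/n$ in $R_n$ and $T_n$ survive the passage from period $Q$ to period $bQ$. Working with the centered quantity $T$ is what keeps this clean: its symmetric definition absorbs the $O(b)$ losses at the two ends of each repeated block, whereas the one-sided $R$ is sensitive to the exact starting index and would require tracking the first block separately.
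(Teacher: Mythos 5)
Your proof is correct, and it rests on the same core reduction as the paper's: for $\alpha=a/b$ rational, $\alpha n^2$ is periodic modulo $1$, so along each residue class modulo $b$ the orbit $x_n$ is an orbit of the rotation by $b\beta$; one checks $b\beta\in\T_w$; and Lemma~\ref{lem0} finishes. Where you genuinely differ is in how the $b$ interleaved rotation codings are recombined. The paper keeps them separate: it invokes (implicitly, from the proof of Lemma~\ref{lem0} via Theorems~\ref{veechthm} and~\ref{v2thm}) a uniform sequence of repetition scales $Q_k$ valid for all starting points in a single full-measure set $\tilde G_\beta$, and then forces the relevant starting points to be good simultaneously by intersecting countably many translates of $\tilde G_\beta$; note that this uses more than the literal statement of Lemma~\ref{lem0}, which contains no uniformity in the starting point. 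Your block encoding $S_k=(s_{bk},\dots,s_{bk+b-1})$ sidesteps that issue: refining $J$ by the offsets $d_r$ turns the whole interleaved sequence into one rotation coding, so Lemma~\ref{lem0} applies verbatim as a black box to a single sequence, and the repetition transfers back with period multiplied by $b$ and an $O(b)$ loss that the symmetric quantity $T$ absorbs, after which $R=\infty$ follows from $T\le R$. Two small points to patch, neither serious: atoms of the common refinement of arc partitions of $\T$ need not be connected (two long arcs can intersect in two pieces), so refine instead by the full set of endpoints, which yields a genuine interval partition and only sharpens the coding; and $S$ is a letterwise image of $s(0,b\beta,\gamma,\tilde J)$ rather than literally equal to it (distinct atoms of $\tilde J$ may produce the same $b$-tuple), which is still exactly what the transfer of repetitions requires. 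In short, your route buys a clean, black-box use of Lemma~\ref{lem0} at the cost of an alphabet enlargement, whereas the paper avoids recoding but pays with a measure-theoretic piecing-together that leans on uniformity extracted from the lemma's proof.
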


\begin{proof}
Write $\alpha = \frac{p}{q}$. Notice that $\alpha n^2$ is
$q$-periodic. This serves as a motivation to begin with a study of
repetitions along arithmetic progressions of step-length $q$. Put
differently, we regard $\beta (qn)$ as $(\beta q) n$ and then add
the constant $\alpha (qn)^2 + \gamma$.

For every  $\beta \in \T_w$, $\beta q\in T_w$ as well.  By
Lemma \ref{lem0},  there exists a sequence $Q_k \to \infty$ and
a set $\tilde G_\beta \subset \T$ of
full measure such that for $\tilde \gamma \in \tilde G_\beta$, the
coding of $(\beta q)n + \tilde \gamma$ with respect to the given
partition of $\T$ has $k$ repetitions of length $Q_k$ to both
sides, for every $k \ge 1$.

To piece together these repetitions along arithmetic progressions
of step-length $q$, define
$
\mathcal{G}'_\beta = \bigcap_{k = 1}^\infty \bigcap_{n =
1}^{Q_k} \{ \gamma \in \T : \alpha (qn)^2 + \beta (qn) + \gamma
\in \tilde G_\beta \}.
$

As a countable intersection of sets of full measure,
$\mathcal{G}'_\beta$ has full measure. We find that for
$\alpha = \frac{p}{q}$ rational, $\beta\in\T_w$  and
$\gamma \in\mathcal{G}'_\beta$  (and hence for almost every
$(\beta,\gamma) \in \T^2$), $R(s) = T(s) = \infty$.
\end{proof}

\begin{theorem}\label{propwellapprox}
There is a dense $G_\delta$ set $\mathcal{R} \subset \T$ such that
for $\alpha \in \mathcal{R}$, we have $R
(s(\alpha,\beta,\gamma,J)) = T(s(\alpha,\beta,\gamma,J)) = \infty$
for Lebesgue almost every $(\beta , \gamma) \in \T^2$.
\end{theorem}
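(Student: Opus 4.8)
The plan is to realize $\mathcal{R}$ as a dense $G_\delta$ obtained by transferring the rational case, Theorem~\ref{ratreplem}, to nearby irrationals, and to bridge the category statement in $\alpha$ with the almost everywhere statement in $(\beta,\gamma)$ by means of Borel--Cantelli. The mechanism that makes this possible is that $R(s)=T(s)=\infty$ is equivalent to a countable family of \emph{finite}, and hence perturbation stable, conditions on the coding: for each level $k$ I ask that there exist $n$ with $k\le n\le L_k$ realizing $T_n(s)\ge k$, where $L_k$ is a threshold to be fixed. If, for a given $\alpha$ and for almost every $(\beta,\gamma)$, the coding is good at level $k$ for all sufficiently large $k$, then since the witnessing $n$ satisfies $n\ge k\to\infty$ while $T_n\ge k\to\infty$, one gets $\limsup_n T_n(s)=\infty$, that is $T(s)=R(s)=\infty$.

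Fix a level $k$. For a rational $p/q$, Theorem~\ref{ratreplem} furnishes a full measure set of $(\beta,\gamma)$ (recall $\T_w$ has full measure, and then almost every $\gamma$ works) on which $T(s(p/q,\beta,\gamma,J))=\infty$, so that $\limsup_n T_n=\infty$. By continuity from below, one may fix $L_k=L_k(p/q)$ so large that the set of $(\beta,\gamma)$ for which $s(p/q,\beta,\gamma,J)$ is good at level $k$ has measure exceeding $1-2^{-k-1}$. Being good at level $k$ depends only on the coding values $s_j$ with $|j|\le M_k:=kL_k$, that is, on the positions of the points $x_j=(p/q)j^2+\beta j+\gamma$, $|j|\le M_k$, relative to the finite set of partition endpoints. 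Replacing $p/q$ by any $\alpha'$ with $|\alpha'-p/q|<\delta$ moves each such $x_j$ by at most $\delta M_k^2$, so the entire level-$k$ verdict is unchanged as long as no $x_j$ lies within $\delta M_k^2$ of a partition endpoint. Since for each $j$ the point $x_j$ is uniformly distributed as $(\beta,\gamma)$ ranges over $\T^2$, the measure of the set of $(\beta,\gamma)$ for which some $x_j$, $|j|\le M_k$, lies in such a strip is $O(\delta M_k^3 N)$; choosing $\delta=\delta_k(p/q)$ small makes this smaller than $2^{-k-1}$. Hence for every $\alpha'$ with $|\alpha'-p/q|<\delta_k(p/q)$, the set of $(\beta,\gamma)$ for which $s(\alpha',\beta,\gamma,J)$ is good at level $k$ has measure greater than $1-2^{-k}$.

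With these choices in hand, I set $U_k=\bigcup_{p/q}\{\alpha\in\T:|\alpha-p/q|<\delta_k(p/q)\}$, the union over all rationals, which is open and, containing a neighborhood of every rational, dense; then $\mathcal{R}=\bigcap_{k\ge 1}U_k$ is a dense $G_\delta$. For $\alpha\in\mathcal{R}$ and each $k$, membership $\alpha\in U_k$ gives, by the previous paragraph, that the set $B_k$ of $(\beta,\gamma)$ failing to be good at level $k$ has measure less than $2^{-k}$. As $\sum_k|B_k|<\infty$, Borel--Cantelli yields that for almost every $(\beta,\gamma)$ the coding is good at level $k$ for all but finitely many $k$, which by the first paragraph gives $R(s(\alpha,\beta,\gamma,J))=T(s(\alpha,\beta,\gamma,J))=\infty$.

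The main obstacle is the transfer step in the second paragraph: one must calibrate the admissible distance $\delta_k(p/q)$ so that the specific level-$k$ repetition pattern exhibited by the rational $p/q$ survives for all nearby $\alpha$ outside a set of $(\beta,\gamma)$ of measure at most $2^{-k-1}$, which forces a careful interplay between the window size $M_k$, the perturbation bound $\delta M_k^2$, and the measure of the boundary strips. This is precisely the place where the Baire category statement in $\alpha$ is reconciled with the full measure statement in $(\beta,\gamma)$, the bridge being the decomposition of the infinite repetition property into countably many finite, perturbation stable conditions.
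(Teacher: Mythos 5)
Your proposal is correct and follows essentially the same route as the paper: both transfer Theorem~\ref{ratreplem} to nearby irrationals by exploiting that a finite two-sided repetition pattern is stable under small perturbations of $\alpha$ as long as the finitely many orbit points stay away from the partition endpoints, build $\mathcal{R}$ as a countable intersection of dense open unions of neighborhoods of rationals, and finish with Borel--Cantelli in $(\beta,\gamma)$. The only difference is cosmetic: where the paper secures the perturbation neighborhood via an open-set/compact-subset argument giving a uniform distance to the partition points, you obtain it by an explicit quantitative estimate (displacement $\delta M_k^2$ versus boundary strips of total measure $O(\delta M_k^3 N)$), which amounts to the same stability mechanism.
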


\begin{proof}
Let $r_1, r_2, r_3, \ldots$ be a sequence of rational numbers that
contains each fixed $\frac{p}{q} \in \Q \cap (0,1)$ infinitely
many times.

Fix $k$ and consider the coding of $r_k n^2 + \beta n + \gamma$
with respect to the given partition, denoted by $s_k$. By
Theorem~\ref{ratreplem} we have that
$
T(s_k) = \infty \text{ for almost every } \beta,\gamma.
$

For $m \in \Z_+$, we can therefore choose a set $\mathcal{B}_{m,k}
\subset \T^2$ such that
\begin{itemize}

\item $B_{m,k}$ is open,

\item $\mathrm{Leb} (\mathcal{B}_{m,k}) > 1 - \frac{1}{2^m}$,

\item for $(\beta , \gamma) \in \mathcal{B}_{m,k}$, $s_k$ has $m$
repetitions in both directions at least once,

\item for $(\beta , \gamma) \in \mathcal{B}_{m,k}$, the itinerary
$r_k n^2 + \beta n + \gamma$ does not contain any partition point
for $n$'s from the finite interval on which we observe the $m$
repetitions in both directions.

\end{itemize}

Next, choose $\mathcal{K}_{m,k} \subset \mathcal{B}_{m,k}$ such
that
\begin{itemize}

\item $\mathcal{K}_{m,k}$ is compact,

\item $\mathrm{Leb} (\mathcal{K}_{m,k}) > 1 - \frac{1}{2^m}$.

\end{itemize}

By compactness of $\mathcal{K}_{m,k}$, we have that for
$(\beta,\gamma) \in \mathcal{K}_{m,k}$, the itinerary $r_k n^2 +
\beta n + \gamma$ for $n$'s from the finite interval on which we
observe the $m$ repetitions in both directions has a uniform
positive distance from the partition points.

Consequently, we can perturb $r_k$ slightly and not change the
coding on the (large) finite interval that supports the two-sided
repetition in question. In other words, there is an open set
$\mathcal{U}_k$ containing $r_k$ such that for $\alpha \in
\mathcal{U}_k$ and $(\beta,\gamma) \in \mathcal{K}_{m,k}$, $s$ has
$m$ repetitions in both directions at least once.

Define
$
\mathcal{R = }\bigcap_{m \ge 1} \bigcup_{k \ge m} \mathcal{U}_k.
$
Since $r_1, r_2, r_3, \ldots$ contains each fixed $\frac{p}{q} \in
\Q \cap (0,1)$ infinitely many times, the open set $\bigcup_{k \ge
m} \mathcal{U}_k$ is dense.

Therefore, $\mathcal{R}$ is a dense  $G_\delta$ set.

By construction, we have that for $\alpha \in \mathcal{R}$ and
Lebesgue almost every $(\beta,\gamma) \in \T^2$, $R(s) = T(s) =
\infty$. Indeed, if $\alpha \in \mathcal{R}$, then $\alpha$
belongs to some $\mathcal{U}_k$. By Borel-Cantelli and the measure
estimates for the sets $\mathcal{K}_{m,k}$, we have that for
almost every $(\beta,\gamma) \in \T^2$, $s$ has $m$ repetitions to
both sides for any $m$.
\end{proof}

The residual set obtained in Theorem~\ref{propwellapprox} is not
explicit. If we are willing to settle for infinite repetitions for
just one pair $(\beta,\gamma) \in \T^2$, then the following result
is of interest. It also has the advantage that the argument we
give can treat general polynomials, not merely quadratic
polynomials, and hence we state and prove the result in this more
general setting. For $\tau > 0$, denote
$$
\mathcal{S}_\tau = \{ \alpha \in \T : \langle q \alpha \rangle <
q^{-\tau} \text{ for infinitely many odd positive integers } q \}.
$$
Clearly, $\mathcal{S}_\tau$ is a residual subset of $\T$.

\begin{theorem}
Let $r$ be a positive integer and $\varepsilon > 0$. Then, for
every $\alpha \in \mathcal{S}_{r + \varepsilon}$, the sequence $s$
given by $s_n = \chi_{[0,1/2)}(\alpha n^r + 1/4)$ obeys $R(s) =
T(s) = \infty$.
\end{theorem}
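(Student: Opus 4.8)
The plan is to prove $T(s)=\infty$; since $T(\omega)\le R(\omega)$ always holds, this also gives $R(s)=\infty$. I would first rewrite the two-sided pinned repetition condition in a symmetric form. Substituting $i=1-k$ in the second condition defining $T_n$ shows that $T_n(s)\ge 1+\frac{m}{n}$ precisely when $s_i=s_{i+n}$ for every $i$ in the symmetric window $\{1-m,\dots,m\}$. Hence it suffices to exhibit, for arbitrarily large $n$, a long symmetric window on which $s$ is genuinely $n$-periodic. I would take $n=q$, where $q$ ranges over the infinitely many odd positive integers furnished by $\alpha\in\mathcal{S}_{r+\varepsilon}$, i.e.\ $\langle q\alpha\rangle<q^{-(r+\varepsilon)}$.

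Fix such an odd $q$ and write $q\alpha=p+\delta$ with $p\in\Z$ and $|\delta|=\langle q\alpha\rangle<q^{-(r+\varepsilon)}$, so $\alpha=\frac{p}{q}+\frac{\delta}{q}$. I would compare $s$ with the idealized sequence $\tilde s_i=\chi_{[0,1/2)}\!\big(\tfrac{p}{q}i^r+\tfrac14\big)$. Since $(i+q)^r\equiv i^r \pmod q$, the number $\tfrac{p}{q}i^r \bmod 1$ depends only on $i\bmod q$; thus $\tilde s$ is $q$-periodic and $\tilde s_i=\tilde s_{i+q}$ for all $i$. The arithmetic core is a margin estimate: writing $\tfrac{p}{q}i^r\equiv \tfrac{k}{q}\pmod 1$ with $k\in\Z$, the distance of $\tfrac{k}{q}+\tfrac14$ to the set of partition endpoints $\tfrac12\Z$ equals $\min_\ell|\tfrac{4k+q(1-2\ell)}{4q}|$, and the numerator $4k+q(1-2\ell)$ is odd (as $q$ is odd), hence nonzero. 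Therefore $\tfrac{p}{q}i^r+\tfrac14$ keeps distance at least $\tfrac{1}{4q}$ from both $0$ and $\tfrac12$. This is exactly where the center $\tfrac14$, the half-interval $[0,\tfrac12)$, and the oddness of $q$ conspire.

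Finally I would control the perturbation. For $|i|\le Kq$ one has $|\alpha i^r-\tfrac{p}{q}i^r|=\tfrac{|\delta|}{q}|i|^r<K^r q^{-1-\varepsilon}$, which is below the margin $\tfrac{1}{4q}$ as soon as $K^r\le\tfrac14 q^{\varepsilon}$; on this window the points lie on the same side of the partition, so $s_i=\tilde s_i$. Choosing $K=K(q)=\lfloor 4^{-1/r}q^{\varepsilon/r}\rfloor$, which tends to infinity, and using the $q$-periodicity of $\tilde s$, I obtain $s_i=\tilde s_i=\tilde s_{i+q}=s_{i+q}$ for all $i\in\{-Kq,\dots,(K-1)q\}\supseteq\{1-m,\dots,m\}$ with $m=(K-1)q$, whence $T_q(s)\ge 1+\tfrac{m}{q}=K$. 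Letting $q\to\infty$ along the odd approximating denominators gives $T(s)=\infty$. The main obstacle is ensuring the admissible window size $Kq$ outgrows the period $q$, i.e.\ that $K\to\infty$; this is precisely why one needs the extra exponent $\varepsilon$ (membership in $\mathcal{S}_{r+\varepsilon}$ rather than merely $\mathcal{S}_r$), since the slack $q^{\varepsilon}$ between the approximation quality $q^{-(r+\varepsilon)}$ and the margin $\tfrac{1}{4q}$ is exactly what lets $K$ diverge.
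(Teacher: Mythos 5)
Your proposal is correct and takes essentially the same approach as the paper: both choose the repetition period $n=q$ along the infinitely many odd denominators with $\langle q\alpha\rangle < q^{-r-\varepsilon}$, both extract the key margin $\frac{1}{4q}$ from the oddness of $q$ at the quarter-shifted rational points, and both use the slack $q^{\varepsilon}$ between this margin and the perturbation bound $\sim q^{-1-\varepsilon}$ to make the repetition count diverge. The only cosmetic difference is organizational: you compare $s$ with the exactly $q$-periodic coding of the rational model $p/q$ on a window of length $\asymp q^{1+\varepsilon/r}$, whereas the paper directly bounds the orbit difference $\langle \alpha(n+q)^r - \alpha n^r\rangle$ via the binomial expansion and checks it stays below the margin $\langle \alpha n^r \pm 1/4\rangle \ge \frac{1}{4q} - m^r q^{-1-\varepsilon}$ --- the underlying estimates are identical.
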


\begin{proof}
Fix an integer $m \ge 2$ and let $q$ be an odd integer with
$\langle q \alpha \rangle < q^{- r - \varepsilon}$. Let $p \in \Z$
be such that $\langle q\alpha \rangle = |q\alpha - p|$. Then, for
$|n| \le mq$ we have, on the one hand,
\begin{align*}
\langle \alpha (n+q)^r - \alpha n^r \rangle & = \left\langle
\sum_{j = 0}^{r-1} \begin{pmatrix} r \\ j \end{pmatrix} \alpha n^j
q^{r-j} \right\rangle \le \sum_{j = 0}^{r-1} \begin{pmatrix} r
\\ j
\end{pmatrix} n^j q^{r - 1 - j} \left\langle q \alpha
\right\rangle \\
& \le \sum_{j = 0}^{r-1} \begin{pmatrix} r \\ j
\end{pmatrix} m^j q^{r-1} \frac{1}{q^{r + \varepsilon}}
= \frac{1}{q^{1+\varepsilon}} \sum_{j = 0}^{r-1} \begin{pmatrix} r
\\ j \end{pmatrix} m^j
\end{align*}
and, on the other hand,
$$
\langle \alpha n^r \pm 1/4 \rangle \ge \left\langle \frac{p}{q}
n^r \pm \frac{1}{4} \right\rangle - \frac{n^r}{q^{1 + r +
\varepsilon}} \ge \frac{1}{4q} - \frac{m^r}{q^{1+\varepsilon}}.
$$
Since for large enough $q$, we have that
$$
\frac{1}{4q} - \frac{m^r}{q^{1+\varepsilon}} >
\frac{1}{q^{1+\varepsilon}} \sum_{j = 0}^{r-1} \begin{pmatrix} r
\\ j \end{pmatrix} m^j,
$$
and hence $T(s) \ge m$. Since $m$ was arbitrary, this shows $T(s)
= \infty$, which also implies that $R(s) = \infty$.
\end{proof}


\begin{thebibliography}{10}

\bibitem{bhz} V.\ Berth\'e, C.\ Holton, and L.\ Zamboni, Initial powers of Sturmian sequences,
\textit{Acta Arith.}\ \textbf{122} (2006), 315--347

\bibitem{bd1} M.\ Boshernitzan, D.\ Damanik, Generic continuous spectrum for ergodic Schr\"odinger
operators, to appear in \textit{Commun.\ Math.\ Phys.}

\bibitem{bd2} M.\ Boshernitzan, D.\ Damanik, The repetition property for sequences on tori generated by
polynomials or skew-shifts, to appear in \textit{Israel J.\ Math.}

\bibitem{d1} D.\ Damanik, Gordon-type arguments in the spectral theory of one-dimensional
quasicrystals, in \textit{Directions in Mathematical
Quasicrystals}, CRM Monograph Series \textbf{13}, American
Mathematical Society, Providence (2000), 277--305

\bibitem{d2} D.\ Damanik, Strictly ergodic subshifts and associated operators, in
\textit{Spectral Theory and Mathematical Physics: A Festschrift in
Honor of Barry Simon's 60th Birthday}, Proceedings of Symposia in
Pure Mathematics \textbf{74}, American Mathematical Society,
Providence (2006), 505--538

\bibitem{dp} F.\ Delyon and D.\ Petritis, Absence of localization in a class of Schr\"odinger
operators with quasiperiodic potential, \textit{Commun.\ Math.\
Phys.}\ \textbf{103} (1986), 441--444

\bibitem{DT} M.\ Drmota and R.\ Tichy, \textit{Sequences, Discrepancies and Applications}, Lecture Notes in
Mathematics \textbf{1651}, Springer-Verlag, Berlin (1997)

\bibitem{f} H.\ F\"urstenberg, Strict ergodicity and transformation of the torus, \textit{Amer.\ J.\
Math.}\ \textbf{83} (1961), 573--601

\bibitem{hw} G.\ Hardy and E.\ Wright, \textit{An Introduction to the Theory of Numbers},
5th edition, Oxford University Press, New York (1979)

\bibitem{Korobov} N.\ Korobov, \textit{Exponential Sums and Their Applications}, Kluwer, Dordrecht
(1992)

\bibitem{KN} L.\ Kuipers and H.\ Niederreiter, \textit{Uniform Distribution of Sequences}, Wiley-Interscience,
New York-London-Sydney (1974)

\bibitem{Lang} S.\ Lang, \textit{Introduction to Diophantine Approximations}, 2nd edition, Springer-Verlag,
New York (1995)

\bibitem{v} W.\ Veech, The metric theory of interval exchange transformations. I.~Generic spectral properties,
\textit{Amer.\ J.\ Math.}\ \textbf{106} (1984), 1331--1359


\end{thebibliography}
\end{document}